\theoremstyle{plain}
\newtheorem{theorem}{Theorem}[section]
\newtheorem{definition}[theorem]{Definition}
\newtheorem{assumption}[theorem]{Assumption}
\newtheorem{lemma}[theorem]{Lemma}
\newtheorem{corollary}[theorem]{Corollary}
\newtheorem{proposition}[theorem]{Proposition}
\theoremstyle{remark}
\newtheorem{remark}[theorem]{Remark}
\def\R{{\mathbf R}}
\def\N{{\mathbf N}}
\def\O{\mathcal O}
\def\F{\mathcal F}
\def\dd{\mathrm d}
\def\({\left(}
\def\){\right)}
\def\<{\left\langle}
\def\>{\right\rangle}
\def\le{\leqslant}
\def\ge{\geqslant}
\def\Eq#1#2{\mathop{\sim}\limits_{#1\rightarrow#2}}
\def\Tend#1#2{\mathop{\longrightarrow}\limits_{#1\rightarrow#2}}
\def\d{{\partial}}
\def\si{{\sigma}}
\DeclareMathOperator{\RE}{Re}
\DeclareMathOperator{\IM}{Im}
\numberwithin{equation}{section}
\begin{document}

\title[Large time in NLS with potential]{Large time behavior in
  nonlinear Schr\"odinger equation 
  with time dependent potential}
\author[R. Carles]{R\'emi Carles}
\address{CNRS \& Univ. Montpellier~2\\Math\'ematiques
\\CC~051\\34095 Montpellier\\ France}
\email{Remi.Carles@math.cnrs.fr}
\author[J. Drumond Silva]{Jorge Drumond Silva}
\address{Center for Mathematical Analysis, Geometry and Dynamical
  Systems\\ Departamento de Matem\'atica\\ Instituto Superior
  T\'ecnico\\ Av. Rovisco Pais\\ 1049-001 Lisboa\\Portugal}
\email{jsilva@math.ist.utl.pt}
\begin{abstract}
We consider the large time behavior of solutions to defocusing
nonlinear Schr\"odinger equation in the presence of a time dependent
external potential. The main assumption on the potential is that it
grows at most quadratically in space, uniformly with respect to the
time variable. We show a general exponential control of first order
derivatives and momenta, which yields a double exponential bound for
higher Sobolev norms and momenta. On the other hand, we show that if
the potential is an isotropic harmonic potential with a time dependent
frequency which decays sufficiently fast, then Sobolev norms
are bounded, and momenta grow at most polynomially in time, because the
potential becomes negligible for large time: there is scattering, even
though the potential is unbounded in space for fixed time. 
\end{abstract}
\thanks{This work was supported by the French ANR projects
  R.A.S. (ANR-08-JCJC-0124-01) and SchEq
  (ANR-12-JS01-0005-01). J. Drumond Silva was partially supported by
  the Center for Mathematical Analysis, Geometry and Dynamical
  Systems-LARSys through the Funda\c{c}\~ao para a Ci\^encia e
  Tecnologia (FCT/Portugal) Program POCTI/FEDER.} 
\maketitle

\section{Introduction}
\label{sec:intro}

\subsection{Motivation}
\label{sec:motiv}

For $x\in \R^d$, we consider the nonlinear Schr\"odinger equation with
a defocusing nonlinearity and a time dependent external potential:
\begin{equation}
  \label{eq:nlsp}
  i\d_t u +\frac{1}{2}\Delta u =V(t,x) u + |u|^{2\si}u;\quad u_{\mid t=0}=u_0.
\end{equation}
Throughout this paper, we make the following assumption on the
potential $V$:
\begin{assumption}\label{hyp:V}
  $V\in L^\infty_{\rm loc}(\R_t\times \R_x^d)$ is real-valued, and
  smooth with respect 
  to the space variable: for (almost) all $t\in \R$, $x\mapsto V(t,x)$
  is a $C^\infty$ map. Moreover, it is at most quadratic in space,
  \emph{uniformly with respect to time}: 
  \begin{equation*}
    \forall \alpha \in \N^d, \ |\alpha|\ge 2, \quad
    \d_x^\alpha V\in L^\infty(\R_t\times \R_x^d). 
  \end{equation*}
In addition, $t\mapsto \sup_{|x| \le 1 }|V(t,x)|$ belongs to $L^\infty(\R)$.
\end{assumption}
Observe that in this assumption --- a global in time version of the one originally imposed in \cite{Fujiwara79} --- the final condition is required to
ensure the boundedness in time of $V$ and its first order derivatives at points
within the unit ball to yield, after two integrations, the estimates
$|\nabla V(t,x)| \lesssim \< x \> $ and $|V(t,x)| \lesssim \<x\> ^2$,
uniformly for (almost) all $t$. This condition could, of course, be
equivalently substituted by demanding uniform boundedness in time for
$V$ and $\nabla V$ at fixed points in $\R_x^d$. It should also be
pointed out that no spectral properties of $V$ are imposed in this
assumption. 

A typical example that we have in mind is the time dependent harmonic
potential:
\begin{equation}
  \label{eq:harmo}
  V(t,x)= \frac{1}{2}\<Q(t)x,x\>,
\end{equation}
where the matrix $Q(t)\in \R^{d\times d}$ is real-valued, bounded and
symmetric. In this case, \eqref{eq:nlsp} appears for instance as an
envelope equation in the propagation of coherent states; see
\cite{CaFe11}. The model \eqref{eq:nlsp} with \eqref{eq:harmo} also
appears in Bose--Einstein condensation, typically 
for $\si=1$ (or $\si=2$ sometimes in the one-dimensional case $d=1$),
with $Q(t)$ a diagonal matrix; 
see e.g. \cite{CD96,GRPGV01,RKRP08}.

Throughout this paper, for $k\in \N$, we will denote by
\begin{equation*}
  \Sigma^k = \left\{ f\in L^2(\R^d)\ ;\ \| f \| _{\Sigma^k}:=
    \sum_{|\alpha|+|\beta|\le  k}\left\lVert x^\alpha \d_x^\beta
      f\right\rVert_{L^2(\R^d)}<\infty\right\}, 
\end{equation*}
and $\Sigma^1=\Sigma$. The main result in \cite{CaFe11} relies on the
property that the $\Sigma^k$ norm of $u$ solution to
\eqref{eq:nlsp}--\eqref{eq:harmo} grows at most exponentially in
time. This property has been established in some cases (see next
subsection), and we present here several extensions. 
\smallbreak

Since, except possibly for the potential $V$, the equation is invariant under
the transform $u(t,x)\mapsto \bar u(-t,x)$, from now on we consider 
\eqref{eq:nlsp} for $t\ge 0$ only.

\subsection{Known results}
\label{sec:known}

It has been proved in \cite{Ca11} that under Assumption~\ref{hyp:V}, with $\si>0$
and if the nonlinearity is energy-subcritical ($\si<2/(d-2)$ if $d\ge
3$), then for all $u_0\in \Sigma$, \eqref{eq:nlsp} has a unique,
local solution, such that $u,xu,\nabla u \in C((-T,T);L^2)\cap
L^{(4\si+4)/(d\si)}_{\rm  loc}(\R;L^{2\si+2})$. Moreover, its
$L^2$-norm is independent of time,
\begin{equation*}
  \|u(t)\|_{L^2}=\|u_0\|_{L^2},\quad \forall t\in (-T,T).
\end{equation*}
It is also shown in \cite{Ca11} that the only obstruction to global existence 
is the unboundedness of
$\|\nabla u(t)\|_{L^2}$ in finite time, a possibility which is ruled
out either if $\si<2/d$ or (when the nonlinearity is defocusing, which is the case in \eqref{eq:nlsp})
 if $V$ is $C^1$ in $t$ and $\d_t V$ satisfies
Assumption~\ref{hyp:V}; one can then let $T=\infty$ in the above
statements. We will prove in this paper that actually, $V$ need not be
$C^1$ in $t$: Assumption~\ref{hyp:V} seems to be the only relevant
hypothesis. 
\smallbreak 

Note that requiring symmetric properties in terms of regularity for
$xu$ and $\nabla u$ is 
natural, at least in the case of the linear harmonic potential, since
the harmonic oscillator rotates the phase space. More generally,
unless $\nabla V$ is 
bounded, \eqref{eq:nlsp} can be solved in $\Sigma$, but not merely in $H^s$,
no matter how large $s$ is; see \cite{CaCauchy}.
\smallbreak

In the energy critical case $\si=2/(d-2)$ with $V(x)= \epsilon |x|^2$
a time independent isotropic quadratic potential ($\epsilon=-1$ or $+1$), it
was proved in \cite{KiViZh09} that \eqref{eq:nlsp} has a unique global
solution in $\Sigma$, like in the case $V=0$ proved in
\cite{CKSTTAnnals,RV,VisanH1} (see also \cite{Vi12,KiVi-p}).  
\smallbreak

Concerning the large time behavior and norm growth of the solutions, few results are
available, and only for particular cases of harmonic potentials \eqref{eq:harmo}. 
If the nonlinearity is $L^2$-subcritical and smooth (an
assumption which boils down to the one-dimensional cubic case
$d=\si=1$), and $Q(t)$ (a real valued scalar function in $d=1$) is locally 
Lipschitz and remains bounded, 
then the Sobolev norms and the momenta of $u$ in $L^2$
grow at most exponentially in time under Assumption~\ref{hyp:V},
(\cite{Ca11}): if $u_0\in 
\Sigma^k$, there exists 
$C>0$ such that 
\begin{equation*}
  \|u(t)\|_{\Sigma^k}\le C {\rm e}^{Ct},\quad \forall t\ge 0. 
\end{equation*}
If the nonlinearity is $L^2$-critical or supercritical ($\si\ge 2/d$)
and the case of a time dependent isotropic repulsive
quadratic potential is  considered,
\begin{equation*}
  V(t,x)=\frac{1}{2}\Omega(t)|x|^2,\quad \text{with }\Omega(t)\le 0,
\end{equation*}
($\Omega(t)$ also locally Lipschitz) 
then the same exponential control is available (\cite{Ca11}). 
\smallbreak

We note that if
 $V(t,x)= -|x|^2$, the nonlinearity in \eqref{eq:nlsp} is
negligible for large time as there is scattering in $\Sigma$ (see \cite{CaSIMA} for
the energy-subcritical case, and \cite{KiViZh09} for the
energy-critical case), 
\begin{equation*}
\exists u_+\in \Sigma,\quad  \left\| u(t) - {\rm e}^{i\frac{t}{2}(\Delta
      +|x|^2)}u_+\right\|_{\Sigma}\Tend t {+\infty} 0,
\end{equation*}
and the solutions to
the linear equation (with potential) grow exponentially in time in the
space $\Sigma$, since
\begin{equation*}
  {\rm e}^{i\frac{t}{2}(\Delta
      +|x|^2)}u_+\Eq t{+\infty} \frac{1}{\sinh t}
    \F \(u_+{\rm e}^{|\cdot |^2/2}\)\(\frac{x}{\sinh t}\) {\rm e}^{i\frac{\cosh
     t}{\sinh t}\frac{|x|^2}{2}}, 
\end{equation*}
where we normalize the Fourier transform as
\begin{equation*}
  \F(f)(\xi)=
  \widehat
  f(\xi)=\frac{1}{(2i\pi)^{d/2}} \int_{\R^d} {\rm e}^{-ix\cdot \xi
  }f(x)\dd x. 
\end{equation*}
Scattering (in the $L^2$ topology) also holds for more general time dependent isotropic repulsive
quadratic potentials,
in the $L^2$-critical or supercritical ($\si\ge 2/d$)
cases (\cite{Ca11}).
Therefore, exponential growth of
Sobolev norms for solutions of \eqref{eq:nlsp} does occur in the presence of these 
repulsive quadratic potentials,
for the underlying reason that the corresponding linear solution has that property.

On
the other hand, if $V=0$ and $\si\ge 2/d$ is an integer, there is scattering
in $\Sigma$ to the free linear case, thus leading to bounded Sobolev norms
and momenta that grow polynomially (\cite{Wa04}): if $u_0\in
\Sigma^k$, there exists $C$ such that
\begin{equation*}
  \|u(t)\|_{H^k}\le C;\quad \|\lvert x\rvert^k u(t)\|_{L^2}\le
  C\<t\>^k,\quad \forall t\ge 0. 
\end{equation*}
\smallbreak

Finally, for confining harmonic potentials, one should start by noticing that 
in the time independent case
\begin{equation}
  \label{eq:confining}
  V(x) = \frac{1}{2} \sum_{j=1}^d \Omega_j x_j^2,\quad \text{with }\Omega_j > 0,
\end{equation} 
the conservation of energy (see
\eqref{eq:energy}--\eqref{eq:evolenergy} below) 
immediately implies
boundedness of the $\Sigma$ norm, $u \in L^{\infty}(\R;\Sigma)$. Moreover,
the existence of periodic solutions of the form $u(t,x)=e^{-i\omega t}\psi(x)$ 
to the nonlinear problem, with isotropic
confining harmonic potential $\Omega_j =\Omega >0$ (see \cite{CaRotating} for details),
as well as the linear 
dynamics (which is time-periodic), 
naturally lead to the conjecture that we may also have 
$u \in L^{\infty}(\R;\Sigma^k)$, at least for localized and smooth enough data.
In the case of the one-dimensional harmonic time independent
oscillator $V(t,x)=x^2$, standard techniques yield an exponential
control. Such bounds have been improved in
\cite{GrTh11} for small perturbations of $x^2$, by adapting methods
from finite dimensional 
dynamical systems, to prove that at least for small initial data, the
$\Sigma^k$-norm may be bounded, because the solution is quasi-periodic
in time
(\cite{GrTh11}). Such a conclusion is therefore expected to
remain valid in a rather general setting for confining potentials. See
also \cite{BuThTz-p} for results in this direction. 
\smallbreak

A different perspective consists in considering the case where the
potential decays rapidly in time. Such a case has been considered for
potentials which are exactly quadratic in space:
\begin{proposition}[From Proposition 1.9 and Lemma 4.3 in \cite{Ha13}]\label{prop:lysianne}
  Let $1\le d\le 3$, $\si\in \N$ with $\si=1$ if $d=3$. Suppose that
  $V$ is of the form \eqref{eq:harmo}, with
  \begin{equation}
    \label{eq:lysianne}
    \left\lvert Q(t)\right\rvert +
    \<t\> \left\lvert \frac{\dd }{\dd t}Q(t)\right\rvert \le
    \frac{C}{\<t\>^\gamma},\quad \text{for some }\gamma>2.
  \end{equation}
If $u_0\in \Sigma^k$ ($k\in \N$, $k\ge 1$), then there exist
$\eta,C>0$ such that 
\begin{equation*}
 \|u(t)\|_{H^1}\le C,\quad \|xu(t)\|_{L^2}\le
 C\<t\>^{1+\eta},\quad\|u(t)\|_{\Sigma^k}\le C 
 {\rm e}^{Ct}, \quad \forall t\ge 0.  
\end{equation*}
\end{proposition}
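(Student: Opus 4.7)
My plan is a bootstrap on the momentum $M(t):=\|xu(t)\|_{L^2}^2$, combined with a modified energy estimate that exploits the integrable-in-time decay $|V|\lesssim\langle t\rangle^{-\gamma}|x|^2$ and $|\partial_t V|\lesssim\langle t\rangle^{-\gamma-1}|x|^2$ coming from \eqref{eq:lysianne}.

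First I would introduce the natural energy
\[
E(t)=\tfrac12\|\nabla u(t)\|_{L^2}^2+\tfrac{1}{\sigma+1}\|u(t)\|_{L^{2\sigma+2}}^{2\sigma+2}+\tfrac12\int V(t,x)|u(t,x)|^2\,dx,
\]
for which a direct computation gives $\dot E=\tfrac12\int(\partial_t V)|u|^2\,dx$, so that $|\dot E(t)|\leq C\langle t\rangle^{-\gamma-1}M(t)$, while the potential term in $E$ is controlled by $C\langle t\rangle^{-\gamma}M(t)$. The continuity equation together with an integration by parts yields $\dot M(t)=2\int x\cdot\IM(\bar u\nabla u)\,dx$, whence $|\tfrac{d}{dt}\sqrt{M(t)}|\leq\|\nabla u(t)\|_{L^2}$. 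I would then fix $\eta\in(0,(\gamma-2)/2)$, which is nonempty since $\gamma>2$, and bootstrap the ansatz $M(t)\leq A\langle t\rangle^{2+2\eta}$. Under this assumption, the inequality $1+2\eta-\gamma<-1$ makes $\int_0^t|\dot E(s)|\,ds$ bounded by $C_\gamma A$ uniformly in $t$, and $2+2\eta-\gamma<0$ makes the potential term in $E$ bounded by $CA$; solving for $\|\nabla u\|_{L^2}^2$ in the definition of $E$ and using the defocusing sign of the $L^{2\sigma+2}$ contribution gives $\|\nabla u(t)\|_{L^2}^2\leq C_0(1+A)$ uniformly in $t$. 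Integrating the momentum inequality then yields $M(t)\leq C_1(1+A)\langle t\rangle^2$, which is strictly below $A\langle t\rangle^{2+2\eta}$ once $t\geq T_0$ and $A$ is chosen large enough; the compact initial interval $[0,T_0]$ is handled by the $\Sigma$-valued local theory from \cite{Ca11}. By continuity the bootstrap closes globally, producing both $\|xu(t)\|_{L^2}\leq C\langle t\rangle^{1+\eta}$ and, since $\|u\|_{L^2}$ is conserved, $\|u(t)\|_{H^1}\leq C$.

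For the $\Sigma^k$ bound, I would commute \eqref{eq:nlsp} with the monomials $x^\alpha\partial_x^\beta$ of total order at most $k$. Since $V$ is quadratic in $x$, each commutator $[x^\alpha\partial_x^\beta,V]$ produces terms at most linear in $x$ that are controllable by lower order $\Sigma^j$ norms; combined with the $H^1$ bound already established and Gagliardo--Nirenberg in the admissible range $1\leq d\leq 3$, $\sigma\in\N$, $\sigma=1$ if $d=3$, a Gr\"onwall argument on $\sum_{|\alpha|+|\beta|\leq k}\|x^\alpha\partial_x^\beta u\|_{L^2}^2$ yields $\|u\|_{\Sigma^k}\leq Ce^{Ct}$, as in \cite{Ca11} and Lemma~4.3 of \cite{Ha13}. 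The main obstacle will be closing the bootstrap cleanly: $\eta$ must be chosen strictly below $(\gamma-2)/2$ to simultaneously ensure integrability of $\langle s\rangle^{1+2\eta-\gamma}$ at infinity and boundedness of $\langle t\rangle^{2+2\eta-\gamma}$, and the small-time regime (before the decay of $V$ becomes effective) must be patched to the large-time regime via the local Cauchy theory.
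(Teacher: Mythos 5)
The paper does not prove Proposition~\ref{prop:lysianne}: it quotes it from \cite{Ha13} (Proposition~1.9 and Lemma~4.3 there), only observing that Hari's argument extends from $\si=1$, $d\in\{2,3\}$ to the stated range. Your write-up is therefore an independent reconstruction rather than a variant of anything argued in this text, and as such it is essentially correct. The structure is the natural one given \eqref{eq:lysianne}: the decay of $\dot Q$ makes $\int_0^\infty\lvert\dot E\rvert\,\dd t$ finite under the ansatz $M(t)\le A\langle t\rangle^{2+2\eta}$, the decay of $Q$ makes the potential contribution to $E$ a bounded perturbation, and together with the defocusing sign these give a uniform bound on $\|\nabla u\|_{L^2}$; the elementary inequality $\bigl\lvert\frac{\dd}{\dd t}\sqrt{M}\bigr\rvert\le\|\nabla u\|_{L^2}$ then closes the bootstrap, the threshold $T_0$ being fixed independently of $A$ and the compact interval $[0,T_0]$ absorbed by enlarging $A$. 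Two remarks. First, with \eqref{eq:nlsp} normalized as here, the energy whose time derivative equals $\int\d_tV\,\lvert u\rvert^2$ carries coefficient $1$, not $\tfrac12$, on $\int V\lvert u\rvert^2$ (compare \eqref{eq:energy}--\eqref{eq:evolenergy}); as written, your $E$ and the claimed $\dot E$ are not mutually consistent, though since the discrepancy is a pure constant it does not harm the estimates. Second, the a posteriori output of your bootstrap is $M(t)\lesssim\langle t\rangle^2$, i.e.\ $\|xu(t)\|_{L^2}\lesssim\langle t\rangle$, which actually improves on the stated $\langle t\rangle^{1+\eta}$ and is in line with the sharper Theorem~\ref{theo:decay} obtained in this paper by a lens transform in the isotropic case. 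The $\Sigma^k$ step is only sketched; it is worth noting that the commutator $[x^\alpha\d_x^\beta,V]$ produces, alongside lower-weight pieces, terms of full weight $k$ of the form $(\nabla V)\cdot x^{\alpha}\d_x^{\beta-1}u\sim\lvert Q(t)\rvert\, x^{\alpha+1}\d_x^{\beta-1}u$, which are harmless only because they carry the integrable factor $\lvert Q(t)\rvert\lesssim\langle t\rangle^{-\gamma}$; the single exponential budget is then created by the nonlinear terms, controlled via the $H^1$ bound and Gagliardo--Nirenberg, as you indicate. Writing that part out would complete the argument, but the conclusion is correct and the route is the expected one.
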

As a matter of fact, only the cubic nonlinearity case ($\si=1$), in dimensions $d=2$ or $3$, is
considered in \cite{Ha13}, but the proof remains valid under the above
assumptions.

\subsection{New results}
\label{sec:new}

\begin{theorem}\label{theo:exp}
  Let $d\ge 1$, $\si>0$, with $\si<2/(d-2)$ if $d\ge 3$. If $V$ satisfies
  Assumption~\ref{hyp:V} and $u_0\in
  \Sigma$, then the solution $u$ to \eqref{eq:nlsp} is global in time:
  \begin{equation*}
    u,\nabla u, xu \in C(\R;L^2(\R^d)). 
  \end{equation*}
Moreover, it grows at most exponentially
  in time: there exists $C>0$ such that
\begin{equation*}
 \|u(t)\|_{\Sigma}\le C {\rm e}^{Ct}, \quad \forall t\ge 0.  
\end{equation*}
\end{theorem}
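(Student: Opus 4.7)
The plan is to combine the local well-posedness in $\Sigma$ from \cite{Ca11} with a new a priori bound that prevents $\|u(t)\|_\Sigma$ from blowing up in finite time. Since \cite{Ca11} identifies the unboundedness of $\|\nabla u(t)\|_{L^2}$ as the only possible obstruction to global existence, and since the main novelty here is to drop the requirement that $V$ be $C^1$ in $t$, the usual identity $\tfrac{d}{dt}E(u)=\int \d_t V\,|u|^2\,dx$ (where $E$ contains $\int V|u|^2$) is not available. The idea is therefore to work with a \emph{partial} energy that excludes the linear potential term, and to couple its evolution with that of $\|xu(t)\|_{L^2}^2$.

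Explicitly, I would set
\[
\mathcal{E}(t):= \frac{1}{2}\|\nabla u(t)\|_{L^2}^2 + \frac{1}{\sigma+1}\|u(t)\|_{L^{2\sigma+2}}^{2\sigma+2}.
\]
Keeping the nonlinear potential term inside $\mathcal{E}$ is crucial: a direct (formal) computation shows that its time derivative cancels the \emph{nonlinear} contribution coming from $\tfrac12\|\nabla u\|_{L^2}^2$, leaving only the commutator of $V$ with $-\tfrac12\Delta$:
\[
\frac{d}{dt}\mathcal{E}(t) = -\IM \int \nabla V(t,x)\,\bar u(t,x)\cdot\nabla u(t,x)\,dx.
\]
By Assumption~\ref{hyp:V} we have $|\nabla V(t,x)|\lesssim \<x\>$ uniformly in $t$, so Cauchy--Schwarz together with mass conservation $\|u(t)\|_{L^2}=\|u_0\|_{L^2}$ yields
\[
\left|\frac{d}{dt}\mathcal{E}(t)\right|\lesssim \(\|xu(t)\|_{L^2}+1\)\sqrt{\mathcal{E}(t)}.
\]

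In parallel, a direct calculation gives the classical identity
\[
\frac{d}{dt}\|xu(t)\|_{L^2}^2 = 2\IM \int x\,\bar u\cdot\nabla u\,dx \le 2\|xu(t)\|_{L^2}\|\nabla u(t)\|_{L^2},
\]
the contributions of $V|u|^2$ and $|u|^{2\sigma+2}$ dropping out of the imaginary part since they are purely real. Setting $F(t):=\mathcal{E}(t)+\|xu(t)\|_{L^2}^2$ and combining the two inequalities yields $F'(t)\lesssim F(t)+\sqrt{F(t)}$. Writing $G:=\sqrt{F}$ turns this into the linear differential inequality $G'\lesssim G+1$, which integrates to $F(t)\le C{\rm e}^{Ct}$. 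Hence $\|u(t)\|_\Sigma \le C{\rm e}^{Ct/2}$, and in particular $\|\nabla u(t)\|_{L^2}$ remains bounded on every finite interval, so by \cite{Ca11} the local solution extends to all $t\ge 0$.

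The main difficulty will be the rigorous justification of the above formal identities, since $u\in C(\R;\Sigma)$ has only $H^1$ spatial regularity while, for instance, $\frac{d}{dt}\|\nabla u\|_{L^2}^2$ nominally involves $\Delta u$. I would expect to handle this by approximating $u_0$ in $\Sigma$ by Schwartz data, applying the identities to the smoother local solutions provided by the Cauchy theory of \cite{Ca11}, and passing to the limit in the integral form of the resulting inequalities using the Strichartz-based continuous dependence on initial data furnished by the same reference.
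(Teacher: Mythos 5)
Your proof is correct and follows essentially the same route as the paper's. The pseudo-energy you work with, $\mathcal{E}(t)+\|xu(t)\|_{L^2}^2$, is (up to a harmless factor of $1/2$ on the momentum term) exactly the quantity the paper uses, and the resulting identity $\frac{d}{dt}\bigl[\mathcal{E}+\tfrac12\|xu\|_{L^2}^2\bigr]=\IM\int \bar u\,(x-\nabla V)\cdot\nabla u\,dx$ combined with $|\nabla V|\lesssim\langle x\rangle$, mass conservation, and Gronwall is precisely the paper's argument; you merely present the differentiation term by term rather than via the shortcut through the full energy $E$.
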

Note that in general, this bound is (qualitatively) sharp, as shown by
the repulsive harmonic potential case
 $V(t,x)= -|x|^2$ mentioned above. Unlike the previously known results,
 this norm growth conclusion is not restricted to harmonic potentials only.
Using Strichartz estimates, we infer the following corollary, concerning the growth rate of
the higher order $\Sigma^k$ norms:
\begin{corollary}[Double exponential bound]\label{cor:doubleexp}
  Let $d\ge 1$, $k\ge 2$, $\si>0$ with $\si<2/(d-2)$ if $d\ge 3$. Suppose that
  the map $z\mapsto |z|^{2\si}z$ is $C^k$. If $u_0\in \Sigma^k$, then
  there exists $C>0$ such that 
  \begin{equation*}
    \sup_{2\le |\alpha|+|\beta|\le k}\|x^\alpha \d_x^\beta
    u(t)\|_{L^2}\le C {\rm e}^{{\rm e}^{Ct}}, \quad \forall t\ge 0.  
  \end{equation*}
\end{corollary}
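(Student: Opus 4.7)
The plan is to proceed by induction on $m = |\alpha|+|\beta|$, starting from the base case $m=1$ which is exactly the content of Theorem~\ref{theo:exp} (exponential bound for $\|u(t)\|_\Sigma$), and to pass from order $m-1$ to order $m \le k$ by combining local-in-time Strichartz estimates for the linear propagator associated with $-\frac{1}{2}\Delta+V(t,\cdot)$ (which hold on bounded time intervals under Assumption~\ref{hyp:V} via Fujiwara's parametrix, as exploited in \cite{Ca11}) with an iteration argument whose step size is controlled only by the already-known $\Sigma$-norm.

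For fixed $m\ge 2$ and $\Gamma = x^\alpha \d_x^\beta$ with $|\alpha|+|\beta|=m$, I would apply $\Gamma$ to \eqref{eq:nlsp}, obtaining
\[
i\d_t(\Gamma u)+\tfrac{1}{2}\Delta(\Gamma u)=V\,\Gamma u + |u|^{2\si}\Gamma u + R_\Gamma,
\]
where $R_\Gamma = [\tfrac{1}{2}\Delta-V,\Gamma]u + \bigl(\Gamma(|u|^{2\si}u)-|u|^{2\si}\Gamma u\bigr)$. The linear commutator splits into $[\Delta,x^\alpha]$, producing differential operators of total order $m$ that lie in $\Sigma^m$, and $[V,\d_x^\beta]u$, which by the Leibniz rule is a sum of products $\d_x^\gamma V \cdot \d_x^{\beta-\gamma} u$ with $|\gamma|\ge 1$; since $\d_x^\gamma V$ is bounded for $|\gamma|\ge 2$ and $|\nabla V(t,x)|\lesssim \<x\>$ by Assumption~\ref{hyp:V}, every such term is controlled in $L^2$ by $\|u\|_{\Sigma^m}$. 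The nonlinear remainder, using the $C^k$ regularity of $z\mapsto|z|^{2\si}z$ and Faà di Bruno, is a finite sum of products $P(u,\bar u)\prod_j \Gamma_j u$ where each $\Gamma_j$ has order strictly less than $m$, and can therefore be estimated through the induction hypothesis together with Gagliardo--Nirenberg interpolation.

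The second step is to apply Strichartz estimates on a subinterval $I=[t_0,t_0+\tau]$: for a well-chosen admissible pair $(q,r)$ (e.g.\ $r=2\si+2$), using energy-subcriticality $\si<2/(d-2)$,
\[
\|\Gamma u\|_{L^\infty(I;L^2)\cap L^q(I;L^r)} \le C\|\Gamma u(t_0)\|_{L^2} + C\bigl\||u|^{2\si}\Gamma u + R_\Gamma\bigr\|_{L^{q'}(I;L^{r'})}.
\]
Hölder in time combined with Gagliardo--Nirenberg produces, for the leading nonlinear contribution, a prefactor $\tau^\theta\|u\|_{L^\infty(I;\Sigma)}^{2\si}$ in front of $\|\Gamma u\|_{L^q(I;L^r)}$, which is absorbed provided $\tau$ is chosen small depending only on $\|u\|_{L^\infty(I;\Sigma)}$; the commutator and sub-leading nonlinear terms are bounded via the induction hypothesis, giving on each subinterval a discrete inequality of the form $a_{n+1}\le K a_n + S_n$ where $K$ is an absolute constant and $S_n$ is at most doubly exponential.

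The key point, and the source of the double exponential, lies in the iteration. Since $\tau$ depends \emph{only} on $\|u\|_\Sigma$, which grows at most like $e^{Ct}$ by Theorem~\ref{theo:exp}, one may take $\tau_{t_0}\sim e^{-C' t_0}$, so that covering $[0,T]$ requires $N\sim e^{C'T}$ subintervals. Iterating the elementary recursion yields
\[
\|\Gamma u(T)\|_{L^2}\le K^N\|u_0\|_{\Sigma^m}+\sum_{n=1}^N K^{N-n}S_n \le C\exp\!\bigl(e^{C''T}\bigr),
\]
which is the claimed bound. The main obstacle, and the reason Strichartz estimates are needed rather than a crude energy/Gronwall argument, is precisely to ensure that the subinterval length depends only on the first-order quantity $\|u\|_\Sigma$; any dependence on $\|u\|_{\Sigma^m}$ would compound through the induction and produce a tower of exponentials rather than a single double exponential uniform in $2\le m\le k$.
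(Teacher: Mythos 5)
Your proposal follows essentially the same route as the paper: induction on the order $m$, applying the operators $x^\alpha\d_x^\beta$ to the equation, local-in-time Strichartz estimates for the Fujiwara propagator, choosing the subinterval length $\tau\sim {\rm e}^{-Ct}$ precisely because it is dictated only by the exponentially growing $\Sigma$-norm from Theorem~\ref{theo:exp}, and then iterating over $N\sim t\,{\rm e}^{Ct}$ subintervals to get $K^N\sim {\rm e}^{{\rm e}^{Ct}}$. The paper packages the iteration step as a standalone lemma (Lemma~\ref{lemma:main}) whereas you carry it inline, but the content is the same.

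One small bookkeeping inaccuracy: you state that the commutator terms coming from $[\Gamma,V]$ are ``bounded via the induction hypothesis,'' but the Leibniz terms $\nabla V\cdot\d_x^{\beta-e_j}u$ with $|\nabla V|\lesssim\langle x\rangle$ have total order $|\alpha|+1+|\beta|-1=m$, i.e.\ they live in $\Sigma^m$ and are \emph{not} controlled by the induction hypothesis $\Sigma^{m-1}$. As in the paper's term \eqref{eq:generalk3}, they must instead be absorbed on the left, which works because they come with the factor $|t-s|=\tau$ and $\tau$ is taken small; only the genuinely lower-order pieces ($|\gamma|\ge 2$, or more than one derivative falling off $u$) go into the inhomogeneity $S_n$ controlled by induction. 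With that correction your argument matches the paper's.
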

\begin{remark}
	For time independent confining harmonic potentials \eqref{eq:confining} we have boundedness 
	of the
	$\Sigma$ norm of the global solutions $u \in L^{\infty}(\R;\Sigma)$, rather than the general
	exponential growth of Theorem~\ref{theo:exp}. Given this better starting point 
	for the lower order
	derivatives and momenta, and using exactly the same method of proof by induction
	as in this corollary, we obtain then an exponential
	bound for the higher order norms rather than the double exponential  
	  \begin{equation*}
	  	\sup_{2\le |\alpha|+|\beta|\le k}\|x^\alpha \d_x^\beta
	  	u(t)\|_{L^2}\le C {{\rm e}^{Ct}}, \quad \forall t\ge 0.  
	  \end{equation*}
        See Remark~\ref{remark:conf} for details.
\end{remark}
\begin{remark}
  In the case of the nonlinear Schr\"odinger equation without
  potential ($V=0$), one can infer similarly that the $\dot H^k$ ($k\ge 2$)
  norm of solutions which are globally bounded in $H^1(\R^d)$ grows at
  most exponentially in time. The use of Bourgain spaces (as initiated
  in \cite{Bo96,St97}) makes it possible to soften this exponential
  bound to a polynomial bound. However, adapting these spaces to the
  present framework (which, in addition, is not Hamiltonian if $\d_t V\not =0$)
  seems to be a rather challenging issue. 
\end{remark}
\begin{remark}
  Another strategy might consist in resuming the pseudo-energy used in
  \cite{RaSz09}. Note however that the
  pseudo-energy introduced in \cite{RaSz09} turns out to be helpful in
  the context of the analysis of blowing-up solutions. Even in the
  absence of an external potential ($V=0$), we have not been able to
  adjust this pseudo-energy to prove the boundedness of the $H^2$-norm
  of $u$ (nor even an exponential control), a property which is known
  by other arguments.  
\end{remark}
Using a (global) lens transform, we prove the following result, to be
compared with Proposition~\ref{prop:lysianne}. 
\begin{theorem}\label{theo:decay}
    Let $d\le 3$, and $\si\in \N$, with $\si\ge 2/d$ and $\si=1$ if
    $d=3$. Suppose that 
  $V$ is of the form 
  \begin{equation}
    \label{eq:lysianne2}
    V(t,x)=\frac{1}{2}\Omega(t)|x|^2,\quad\text{with}\quad
    \left\lvert \Omega(t)\right\rvert \le 
    \frac{C}{\<t\>^\gamma}\quad \text{for some }\gamma>2.
  \end{equation}
If $u_0\in \Sigma^k$, then there exists $C>0$ such that
\begin{equation}\label{eq:pol1}
 \|u(t)\|_{H^k}\le C,\quad \|\<x\>^ku(t)\|_{L^2}\le C \<t\>^k
 , \quad \forall t\ge 0.  
\end{equation}
Finally, if $u_0\in \Sigma$, then there exists $u_+\in\Sigma$ such
that
\begin{equation*}
  \left\|u(t)- {\rm e}^{i\frac{t}{2}\Delta}u_+\right\|_{L^2}\Tend t {+\infty} 0. 
\end{equation*}
\end{theorem}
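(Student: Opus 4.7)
The plan is to apply the global lens transform associated with the oscillator $\tfrac12\Omega(t)|x|^2$: this compactifies time and reduces \eqref{eq:nlsp} to a standard defocusing NLS (without potential) on a bounded $\tau$-interval, from which both the bounds \eqref{eq:pol1} and the scattering claim follow by reading the transform backwards. The key preliminary is to pick a positive solution $\l(t)$ of $\ddot\l+\Omega(t)\l=0$ with $\l(0)=1$ and $\dot\l(0)=\mu$, $\mu>0$ taken large. The assumption $|\Omega(t)|\le C\<t\>^{-\g}$ with $\g>2$ yields $\Omega,\,t\Omega\in L^1(\R_+)$; a Gronwall argument on the integral form of the ODE then shows that, for $\mu$ large enough, $\l$ is bounded below by a positive constant and $\l(t)=\beta_\infty t+o(t)$ with $\beta_\infty:=\mu-\int_0^\infty\Omega\l\,\dd s>0$, so the rescaled time $\tau(t):=\int_0^t\l^{-2}(s)\,\dd s$ has a finite limit $\tau_\infty<\infty$.

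Setting
\[
 \tilde v(\tau,y)=\l(t)^{d/2}u(t,\l(t)y)\exp\(-i\tfrac{\dot\l(t)\l(t)}{2}|y|^2\),\quad \tau=\tau(t),\ y=x/\l(t),
\]
a direct computation (using $\ddot\l=-\Omega\l$ to cancel the potential) converts \eqref{eq:nlsp}--\eqref{eq:lysianne2} into
\[
 i\d_\tau\tilde v+\tfrac12\Delta_y\tilde v=\l(t(\tau))^{2-d\si}|\tilde v|^{2\si}\tilde v,\quad \tilde v(0,\cdot)=u_0,
\]
posed on the bounded interval $[0,\tau_\infty)$; since $\si\ge 2/d$ and $\l\ge 1$, the coupling $\l^{2-d\si}$ is uniformly bounded (and vanishes at $\tau_\infty$ unless $\si=2/d$). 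On this finite interval, standard Cauchy theory applies: $L^2$-conservation and the monotonicity of the modulated energy $E(\tau)=\tfrac12\|\nabla_y\tilde v\|_{L^2}^2+\tfrac{\l^{2-d\si}}{\si+1}\|\tilde v\|_{L^{2\si+2}}^{2\si+2}$, whose $\tau$-derivative is $\tfrac{2-d\si}{\si+1}\l^{3-d\si}\dot\l\|\tilde v\|_{L^{2\si+2}}^{2\si+2}\le 0$, give a uniform $H^1$ bound; a Gronwall estimate on $\|y\tilde v\|_{L^2}$ on the bounded $\tau$-interval extends this to a $\Sigma$-bound, and iterating via Strichartz on $[0,\tau_\infty]$ as in the proof of Corollary~\ref{cor:doubleexp} yields the $\Sigma^k$-bound when $u_0\in\Sigma^k$, so $v_\infty:=\tilde v(\tau_\infty)\in\Sigma^k$ is well defined.

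Reading the transform backwards, each $\d_{x_j}$ on $u$ corresponds, modulo the Gaussian phase, to $\l^{-1}\d_{y_j}+i\dot\l y_j$ on $\tilde v$ and multiplication by $x_j$ to $\l y_j$. Since $\l^{-1}\to 0$ and $\dot\l$ stays bounded, the $\Sigma^k$-bound for $\tilde v$ gives $\|u(t)\|_{H^k}\le C$ and $\|x^\alpha u(t)\|_{L^2}=\l(t)^{|\alpha|}\|y^\alpha\tilde v(\tau(t))\|_{L^2}\le C\<t\>^{|\alpha|}$, establishing \eqref{eq:pol1}. For the scattering claim, combining $\tilde v(\tau(t))\to v_\infty$ in $\Sigma$ with $\l(t)\sim\beta_\infty t$ and $\dot\l(t)/\l(t)\sim 1/t$ (so that the phase $\dot\l\l|y|^2/2$ pulled back to $x$ reads $\dot\l|x|^2/(2\l)\sim|x|^2/(2t)$) yields
\[
 u(t,x)=(\beta_\infty t)^{-d/2}v_\infty\(\tfrac{x}{\beta_\infty t}\)e^{i|x|^2/(2t)}+o(1)\quad\text{in }L^2.
\]
The Mehler factorisation $e^{it\Delta/2}=M(t)\mathcal D(t)\F M(t)$ with $M(t)=e^{i|x|^2/(2t)}$ and $\mathcal D(t)\phi(x)=(it)^{-d/2}\phi(x/t)$, together with $M(t)u_+\to u_+$ in $L^2$, gives $e^{it\Delta/2}u_+=M(t)\mathcal D(t)\F u_++o(1)$ in $L^2$. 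Matching the two asymptotics determines $u_+\in\Sigma$ through $\F u_+(\xi)=i^{d/2}\beta_\infty^{-d/2}v_\infty(\xi/\beta_\infty)$, and gives $\|u(t)-e^{it\Delta/2}u_+\|_{L^2}\to 0$. The delicate points are the construction of $\l$ with the sharp linear asymptotics — which is exactly what the hypothesis $\g>2$ is designed for (since it provides the integrability of $t\Omega$ needed to make $\dot\l$ converge) — and the scattering matching, which requires $\dot\l$ to have a genuine positive limit rather than merely remain bounded.
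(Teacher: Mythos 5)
Your route is genuinely different from the paper's, and the difference is worth flagging. You use the \emph{compactifying} lens transform: you take a fundamental solution $\l$ of $\ddot\l+\Omega\l=0$ that grows linearly, so the rescaled time $\tau=\int_0^t \l^{-2}$ has a finite limit $\tau_\infty$, and you reduce to a defocusing NLS on a bounded $\tau$-interval. The paper does the opposite: it constructs a fundamental pair $(\mu_\infty,\nu_\infty)$ with $\nu_\infty\to 1$ and $\mu_\infty\sim t$ (fixed point near $t=+\infty$, as in \cite[Lemma~A.1.2]{DG}), takes $b=\nu_\infty$, so that $\zeta=\mu_\infty/\nu_\infty\sim t$ does \emph{not} compactify, and the transformed equation is a non-autonomous NLS $i\d_t v+\tfrac12\Delta v=H(t)|v|^{2\si}v$ on $[t_0,\infty)$ with $H\to 1$ and $\dot H=\O(t^{1-\g})$; bounds and scattering then follow from the energy and pseudo-conformal evolution identities \eqref{eq:energy-na}--\eqref{eq:pseudo-na} and a Gronwall argument. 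Indeed the paper explicitly says the point of its construction is to \emph{avoid} time compactification. Your version is a coherent alternative, essentially the classical lens trick as in the $\Omega=1$ case, and it buys you the comfort of working on a bounded interval instead of doing scattering theory for a non-autonomous equation; the matching of asymptotics at $\tau_\infty$ and the identification of $u_+$ via the Mehler/MDFM factorisation are correctly set up.

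However, there is a real gap in the preliminary step: the claim that, for $\dot\l(0)=\mu$ large enough, the solution of $\ddot\l+\Omega\l=0$, $\l(0)=1$, stays bounded below by a positive constant on all of $[0,\infty)$, with $\beta_\infty=\mu-\int_0^\infty\Omega\l>0$. Under Assumption \eqref{eq:lysianne2} the only control is $|\Omega(t)|\le C\<t\>^{-\g}$; there is no smallness. If, say, $\Omega\equiv A$ on $[0,1]$ with $A$ large (and then decays), the solution on $[0,1]$ is $\cos(\sqrt A\,t)+(\mu/\sqrt A)\sin(\sqrt A\,t)$, which vanishes near $t\approx \pi/\sqrt A$ \emph{no matter how large} $\mu$ is. Increasing $\mu$ does not help because $\int_0^t\Omega\l$ also scales linearly in $\mu$; concretely, the Gronwall bound $|\l(t)|\le \max(1,\mu)\,e^{\kappa}(1+t)$ with $\kappa=\int_0^\infty(1+s)|\Omega|$ only forbids a first zero when $e^\kappa\kappa<1$, and $\kappa$ is finite but not small under the hypothesis. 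The fix is the one the paper uses: run the transform only for $t\ge T$ with $T$ large (so that $\int_T^\infty(1+s-T)|\Omega(s)|\,\dd s$ is small and the fixed-point/Gronwall argument closes), and handle $[0,T]$ directly from Theorem~\ref{theo:exp} or the local theory. As written, your construction of $\l$ from $t=0$ does not go through, even though the rest of the argument — $H^1$ a priori bound from the monotone modulated energy, $\Sigma^k$ bounds on the bounded $\tau$-interval via Strichartz, and the $L^2$ scattering identification — is sound.
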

\begin{remark}
  A consequence of the proof of this result is that if the potential
satisfies \eqref{eq:lysianne2}, the Strichartz estimates associated to
the linear evolution are \emph{global in time} (while, as recalled
above, this is not the case if $V(t,x)=|x|^2$).
\end{remark}

Compared to Proposition~\ref{prop:lysianne}, our assumptions seem to
be  more stringent on two aspects:
\begin{itemize}
\item The matrix $Q$ is of the form $Q(t)=\Omega(t){\rm I}_d$, i.e. we consider 
isotropic potentials only.
\item The nonlinearity is $L^2$-critical or $L^2$-supercritical
  ($\si\ge 2/d$). 
\end{itemize}
However, it turns out that the second point rules out only one case compared
to Proposition~\ref{prop:lysianne}, and that is when $d=\si=1$, for which
exponential bounds in $\Sigma^k$ for all $k$ were already known under the mere
assumption that $\Omega$ is bounded (\cite{Ca11}). 

On the other hand, our assumptions demand only a certain minimum decay in time for $\Omega$ and
impose no restriction on its time derivative (which, in our case, might not even exist). So a rapidly oscillatory potential for large
time as, for instance, is the case with 
\begin{equation*}
  \Omega(t)=\frac{\cos\({\rm e}^t\)}{\<t\>^3}
\end{equation*}
is eligible for Theorem~\ref{theo:decay}, while it is not for
Proposition~\ref{prop:lysianne}. In fact, it does seem more natural to require a
decay exclusively on the function $\Omega$, rather than also adding
conditions for its time derivative, as we will see from the
proof of  Theorem~\ref{theo:decay}: in the linear case 
\begin{equation*}
  i\d_t u+\frac{1}{2}\Delta u = \frac{1}{2}
  \Omega(t)|x|^2 u,
\end{equation*}
if $|\Omega(t)|\lesssim \<t\>^{-\gamma}$ for $\gamma>2$,
then $u$ satisfies \eqref{eq:pol1}. As suggested by the last statement
of the proposition, the potential is negligible for large time, even
though for fixed $t$, the harmonic potential cannot be treated as a
perturbation. Heuristically, this can be seen through the standard
asymptotics (in $L^2$)
\begin{equation}\label{eq:scatt}
  {\rm e}^{i\frac{t}{2}\Delta}f \Eq t {+\infty} \frac{1}{t^{d/2}}\widehat
  f\(\frac{x}{t}\) {\rm e}^{i|x|^2/(2t)}.
\end{equation}
Asymptotically, the right variable is $x/t$, and since by assumption
\begin{equation*}
  \left|\Omega(t)\right|\lvert x\rvert^2\lesssim
  \left|\frac{x}{t}\right|^2\frac{1}{t^{\gamma-2}},
\end{equation*}
it is sensible to expect the external potential to be negligible for
large time. The proof of Theorem~\ref{theo:decay} will make this
intuition more precise. Also, note that compared to the conclusion of
Proposition~\ref{prop:lysianne}, the control of the momenta (even in
the case of $\|xu\|_{L^2}$) and higher
Sobolev norms is improved. The sharpness of the decay assumption on
$\Omega$ is discussed in Remark~\ref{rem:sharp}.

\subsection{Outline of the paper}
\label{sec:outline}

Theorem~\ref{theo:exp} is proved in Section~\ref{sec:general}, and
we infer Corollary~\ref{cor:doubleexp} in
Section~\ref{sec:double}. The case where $V$ is an isotropic harmonic
potential \eqref{eq:lysianne2} is treated in
Section~\ref{sec:decay}, where Theorem~\ref{theo:decay} is
established.

\section{Proof of Theorem~\ref{theo:exp}}
\label{sec:general}

First, we recall that from \cite{Ca11}, under the assumptions of
Theorem~\ref{theo:exp}, \eqref{eq:nlsp} has a unique, local
solution. The obstruction to global existence is the unboundedness of
$\|\nabla u(t)\|_{L^2}$ in finite time. Thus, we simply have to prove
a suitable \emph{a priori} estimate. 
\smallbreak

A natural candidate for an energy in the case of \eqref{eq:nlsp}
is 
\begin{equation}\label{eq:energy}
  E(t)=\frac{1}{2}\|\nabla u(t)\|_{L^2}^2
  +\frac{1}{\si+1}\|u(t)\|_{L^{2\si+2}}^{2\si+2} +
  \int_{\R^d} V(t,x) \lvert u(t,x)\rvert^2\dd x. 
\end{equation}
It was established in \cite{Ca11} that if  $V$ is $C^1$ with respect
to $t$, and $\d_t V$ satisfies 
  Assumption~\ref{hyp:V}, then $E\in
  C^1((-T,T);\R)$, and its evolution is given by
  \begin{equation}\label{eq:evolenergy}
    \frac{\dd E}{\dd t} = \int_{\R^d} \d_t V(t,x) \lvert
    u(t,x)\rvert^2\dd x.
  \end{equation}
In the same spirit as in \cite{AnSp10}, introduce the pseudo-energy
\begin{equation*}
  {\mathcal E}(t) = \frac{1}{2}\|\nabla u(t)\|_{L^2}^2
  +\frac{1}{\si+1}\|u(t)\|_{L^{2\si+2}}^{2\si+2} +
  \frac{1}{2}\int_{\R^d} |x|^2 \lvert u(t,x)\rvert^2\dd x. 
\end{equation*}
From the relation
\begin{equation*}
  {\mathcal E}(t) = E(t) +\frac{1}{2}\int_{\R^d} \(|x|^2-2V(t,x)\) \lvert
  u(t,x)\rvert^2\dd x,
\end{equation*}
we infer, at least formally,
\begin{align*}
 \frac{\dd {\mathcal E}}{\dd t} &= \frac{1}{2}\int_{\R^d}
 \(|x|^2-2V(t,x)\) \d_t \lvert
  u(t,x)\rvert^2\dd x\\
&= \RE \int_{\R^d}
 \(|x|^2-2V(t,x)\) \bar u(t,x) \d_t 
  u(t,x)\dd x\\
&= \IM \int_{\R^d}
 \(|x|^2-2V(t,x)\) \bar u(t,x) i\d_t 
  u(t,x)\dd x\\
&= -\frac{1}{2}\IM \int_{\R^d}
 \(|x|^2-2V(t,x)\) \bar u(t,x) 
  \Delta u(t,x)\dd x\\
&= \IM \int_{\R^d}
\bar u(t,x)  \(x -\nabla V(t,x)\)\cdot  
  \nabla u(t,x)\dd x.
\end{align*}
Now from Assumption~\ref{hyp:V} and the observations that follow it, there exists $C$ independent of $t$
such that
\begin{equation*}
  |\nabla V(t,x)|\le C \<x\>. 
\end{equation*}
Therefore, using the conservation of mass and Cauchy--Schwarz
inequality, we infer
\begin{equation*}
   \frac{\dd {\mathcal E}}{\dd t} \le C_0\( 1+ \|x
   u(t)\|_{L^2}\|\nabla u(t)\|_{L^2}\) .
\end{equation*}
From Young's inequality, ${\mathcal E}$ satisfies an inequality of the
form $\dot {\mathcal E} \le C_0(1+{\mathcal E})$, with $C_0$
independent of $t$ (but depending on the conserved mass
$\| u_0 \|_{L^2}$): Gronwall lemma yields an exponential bound. 
\smallbreak

Under the assumptions of Theorem~\ref{theo:exp} though, $V$ need not
be differentiable with respect to time, so the above computations cannot
be followed step by step. To overcome this issue, simply note that the
evolution of $E$ was merely used as a shortcut in the above
presentation, and that by using standard arguments (see
e.g. \cite{CazCourant}), one directly proves 
\begin{equation*}
  \frac{\dd {\mathcal E}}{\dd t} =\IM \int_{\R^d}
\bar u(t,x)  \(x -\nabla V(t,x)\)\cdot  
  \nabla u(t,x)\dd x,\quad \forall t\in (-T,T),
\end{equation*}
and Theorem~\ref{theo:exp} follows.

\section{Double exponential control: proof of
  Corollary~\ref{cor:doubleexp}} 
\label{sec:double}

Corollary~\ref{cor:doubleexp} is proved by induction on $k$, applying the
following lemma to the inequalities that result from the use of Strichartz 
estimates for the evolution equations of
$x^\alpha \d_x^\beta u$. In this lemma, thus, $\|w\|$ must be thought of as a placeholder for
all the combinations of norms of $x^\alpha \d_x^\beta u$ of order $k=|\alpha|+|\beta|$. 
\begin{lemma}
  \label{lemma:main}
  For $0\le s \le t$, denote by $\tau=t-s$ and $I=[s,s+\tau]=[s,t]$. Let $w$ satisfy the following property: there exist Lebesgue exponents $p,q\ge 1$, parameters 
  $\alpha, \tau_0>0$, a non-decreasing function $f$ and a constant $C$ such that, 
  given any $s\ge 0, \ \tau \in [0,\tau_0],$ 
  \begin{equation}
    \label{eq:induction}
    \|w\|_{L^p(I;L^q)\cap L^{\infty}(I;L^2)}\le C \|w(s)\|_{L^2} + C\tau^\alpha {\rm e}^{Ct}
    \|w\|_{L^p(I;L^q)\cap L^{\infty}(I;L^2)}+f(t). 
  \end{equation}
Then there exists $C_1$, depending only on $C$, $\alpha$ and $\tau_0$, but independent 
of $t\ge 0$, such that
\begin{equation*}
  \|w\|_{L^p([0,t];L^q)\cap L^{\infty}([0,t];L^2)}\le C_1{\rm e}^{{\rm e}^{C_1t}}
  (\|w(0)\|_{L^2}
  +f(t)),\quad \forall
  t\ge 0. 
\end{equation*}
\end{lemma}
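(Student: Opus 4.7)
The natural approach is to partition $[0,t]$ into sub-intervals short enough that the bad factor $C\tau^{\alpha}e^{Ct}$ in \eqref{eq:induction} can be absorbed into the left-hand side, and then to iterate the resulting piecewise bound. Fix the total time $t\ge 0$ and choose
\[
\tau=\tau(t)=\min\Bigl(\tau_0,\ (2C)^{-1/\alpha}e^{-Ct/\alpha}\Bigr),
\]
so that $C\tau^{\alpha}e^{Ct}\le 1/2$. Partition $[0,t]$ into $N$ consecutive sub-intervals $I_j=[s_j,s_{j+1}]$ of length at most $\tau$, with $N\le t/\tau+1$. Since $s_{j+1}\le t$ and $f$ is non-decreasing, applying the hypothesis on each $I_j$ and absorbing the bad term gives
\[
\|w\|_{L^p(I_j;L^q)\cap L^{\infty}(I_j;L^2)}\le 2C\|w(s_j)\|_{L^2}+2f(t).
\]

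Using the $L^{\infty}(I_j;L^2)$ part to control $\|w(s_{j+1})\|_{L^2}$, this yields the scalar recursion $a_{j+1}\le 2C\,a_j+2f(t)$ with $a_j=\|w(s_j)\|_{L^2}$, whose explicit solution (assume $C\ge 1$ without loss of generality) is
\[
a_j\le (2C)^{j}\,\|w(0)\|_{L^2}+\frac{2f(t)}{2C-1}\bigl((2C)^{j}-1\bigr)\le C_2(2C)^{N}\bigl(\|w(0)\|_{L^2}+f(t)\bigr)
\]
for all $j\le N$. Feeding this back into the piecewise bound and then summing (for the $L^p$ norm) or taking a maximum (for the $L^{\infty}$ norm) over $j$ gives
\[
\|w\|_{L^p([0,t];L^q)\cap L^{\infty}([0,t];L^2)}\le C_3\,N^{1/p}(2C)^{N+1}\bigl(\|w(0)\|_{L^2}+f(t)\bigr).
\]

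It remains to estimate $N$. By construction, $\tau(t)\gtrsim e^{-Ct/\alpha}$ (up to a multiplicative constant depending only on $C$, $\alpha$, $\tau_0$), hence $N\lesssim 1+t\,e^{Ct/\alpha}$, and consequently
\[
(2C)^{N}=\exp\bigl(N\log(2C)\bigr)\le \exp\bigl(C_4\,t\,e^{Ct/\alpha}+C_4\bigr)\le \exp\bigl(e^{C_1 t}\bigr)
\]
for a sufficiently large $C_1$ depending only on $C$, $\alpha$ and $\tau_0$, and for all $t\ge 0$ (the case of small $t$, say $t\le \tau_0$, being trivial since then $N=1$). The polynomial prefactor $N^{1/p}$ is harmless compared with this double exponential, and the stated bound follows.

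The only delicate point is that the choice of partition step $\tau$ must depend on the \emph{total} time $t$ (so that $e^{Ct'}\le e^{Ct}$ on every sub-interval with right endpoint $t'\le t$), which is what allows a single absorption argument to work uniformly across all pieces. Once this is arranged, everything reduces to a geometric iteration whose rate $2C$ is fixed and whose number of steps grows only like $t\,e^{Ct/\alpha}$, producing the announced double exponential.
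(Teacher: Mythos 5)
Your proof is correct and follows essentially the same route as the paper's: fix the final time $t$, pick a partition step $\tau\sim e^{-Ct/\alpha}$ so that the term $C\tau^\alpha e^{Ct}$ is absorbed (you use $1/2$, the paper $1/10$), iterate the resulting recursion for $\|w(s_j)\|_{L^2}$ across the $N\sim t/\tau$ sub-intervals, and observe that $N\sim t\,e^{Ct/\alpha}$ turns the geometric growth $(2C)^N$ into the announced double exponential. The minor differences (the $N^{1/p}$ prefactor that you track explicitly, the harmless WLOG $C\ge 1$) are cosmetic and do not change the argument.
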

\begin{proof}
	Let us consider the interval $[0,t]$ and fix this $t$ as an upper bound for the time variable.
	
	Then, for this value of $t$, we take $\tau$ satisfying
	\begin{equation}
	\label{eq:tau}
	C\tau^\alpha {\rm e}^{Ct}= \frac{1}{10} \Leftrightarrow \tau = \left(\frac{1}{C 10}\right)^{\frac{1}{\alpha}}  {\rm e}^{-\frac{Ct}{\alpha}},
	\end{equation}
	(without loss of generality, the total time $t$ can be chosen initially to be large enough to have $\tau \le \tau_0$ as well as $\tau \le t$)
	such that the corresponding term in \eqref{eq:induction} gets absorbed by the left hand side of the inequality, as
	\begin{equation}
	\label{eq:bound}
	\|w\|_{L^p(I;L^q)\cap L^{\infty}(I;L^2)}\le \frac{10}{9} \Big(C \|w(t')\|_{L^2}+f(t'+\tau)\Big),
	\end{equation}
	for any interval $I=[t',t'+\tau] \subset [0,t]$.
	
	Now, breaking up the full interval $[0,t]$ into $N \sim t/\tau$ small intervals of length $\tau$,
	$I_j=[t_j,t_{j+1}]=[t_j,t_j + \tau]$, $j=0, \ldots, N-1$, such that 
        $[0,t]=\cup_j I_j$, we have, for $1\le p < \infty$,    
        \begin{multline*}
	  \|w\|_{L^p([0,t];L^q)}^p = \int_0^t \|w\|_{L^q}^p \dd t'=
          \sum_{j=0}^{N-1} \int_{t_j}^{t_j+\tau} \|w\|_{L^q}^p \dd t'= 
          \sum_{j=0}^{N-1} \|w\|_{L^p(I_j;L^q)}^p \\
          \le \sum_{j=0}^{N-1} \|w\|_{L^p(I_j;L^q)\cap L^{\infty}(I_j;L^2)}^p \le
          \( \frac{20}{9}  \)^p \; \sum_{j=0}^{N-1} \big(C^p
          \|w(t_j)\|_{L^2}^p + f(t_{j+1})^p 
          \big),
        \end{multline*}
	We now use the fact that the
	$\|w\|_{L^{\infty}(I_{j-1};L^2)}$ norm, from the previous time
        step $j-1$, bounds the  
        $\|w(t_j)\|_{L^2}$ norm at the initial time of the following one, 
     to obtain from \eqref{eq:bound}
     \begin{equation*}
     	\|w(t_j)\|_{L^2}^p \le \|w\|_{L^p(I_{j-1};L^q)\cap L^{\infty}(I_{j-1};L^2)}^p 
     	\le \left(\frac{20}{9}\right)^p \Big(C^p \|w(t_{j-1})\|_{L^2}^p+f(t_j)^p\Big).
     \end{equation*}    

     Applying this estimate repeatedly and bounding all the terms $f(t_j)$ uniformly
     by their maximum $f(t)$ over the whole interval $[0,t]$, we infer
     \begin{align*}
     	\|w(t_j)\|_{L^2}^p &\le \left(\frac{20}{9} C\right)^{jp} \|w(0)\|_{L^2}^p +
     	\left(1+\left(\frac{20}{9} C\right)^{p}+\dots+\left(\frac{20}{9} C\right)^{(j-1)p}
     	\right) \left(\frac{20}{9}\right)^p f(t)^p\\
     	&\le \tilde{C} \left(\frac{20}{9} C\right)^{jp} \big(\|w(0)\|_{L^2}^p
     	+f(t)^p),
     \end{align*}
     so that
	\begin{equation*}
	 \|w\|_{L^p([0,t];L^q)}^p \le \( \frac{20}{9} \)^p \; 
         \sum_{j=0}^{N-1} C^p \tilde{C} \left(\frac{20}{9}
           C\right)^{jp} \big(\|w(0)\|_{L^2}^p 
         +f(t)^p) + \( \frac{20}{9} \)^p N f(t)^p,
	\end{equation*}
	thus yielding
	\begin{equation*}
         \|w\|_{L^p([0,t];L^q)} \le \tilde{C} \( \frac{20}{9} C \)^{N}
         (\|w(0)\|_{L^2} +f(t)), 
	\end{equation*}
	for a new $\tilde{C}$ constant. Finally, using the fact that
	$N \sim t/\tau$ and \eqref{eq:tau}, one obtains
	\begin{equation*}
	 \|w\|_{L^p([0,t];L^q)} \le C_1 {\rm e}^{{\rm e}^{C_1t}}(\|w(0)\|_{L^2}
	 +f(t)),
	\end{equation*}
        with $C_1$, a final constant, as in the statement of the lemma.

	Of course the case $p=\infty$ is even simpler, as
	\begin{equation*}
	 \|w\|_{L^{\infty}([0,t];L^q)}=\max_{j=0,\ldots,N-1} \|w\|_{L^{\infty}(I_j;L^q)}
	  \le \frac{10}{9}  \Big( C \max_{j=0,\ldots,N-1}  \|w(t_j)\|_{L^2} + f(t)\Big),
	\end{equation*}
	and, as above, each $\|w(t_j)\|_{L^2}$ norm can be controlled
        by the previous time 
	step $\|w\|_{L^{\infty}(I_{j-1};L^2)}$ norm. So that, repeated application again of
	\eqref{eq:bound} yields
	\begin{equation*}
	    \|w\|_{L^{\infty}([0,t];L^q)} \le  \tilde{C} \( \frac{10}{9} C \)^{N} \Big(\|w(0)\|_{L^2}+f(t)\Big),
	\end{equation*}
	from which the double exponential bound for $p=\infty$ now
        follows as before. 

	To conclude, we only need to note that the
        $\|w\|_{L^{\infty}([0,t];L^2)}$ norm falls into 
	this latter case.
\end{proof}

\begin{remark}
	Observe that, if some rate of growth in time is already known
        for the $\|w(t)\|_{L^2}$ norm --- as in the case of the
        conservation of the mass or the exponential growth of the
        $\Sigma$  
	norm in Theorem~\ref{theo:exp}, for example --- then the
        previous proof can be greatly  
	simplified, bounding all of the $\|w(t_i)\|_{L^2}$ uniformly
        by $\|w(t)\|_{L^2}$, to yield  
	the following
	exponential control of the $\|w\|_{L^p([0,t];L^q)}$ norm from
        its corresponding  
	$\|w(t)\|_{L^2}$ norm growth, as well as $f(t)$,
	\begin{equation*}
		\|w\|_{L^p([0,t];L^q)} \le C_1 {\rm e}^{C_1t} \big(\|w(t)\|_{L^2}+f(t)\big). 
	\end{equation*}
\end{remark}

We now return to the main proof of Corollary~\ref{cor:doubleexp} by observing that, although 
Lemma~\ref{lemma:main} imposes no restrictions on the pair $(p,q)$ of Lebesgue exponents, the Strichartz estimates require only admissible pairs suited to each particular equation which,
for the Schr\"odinger equation (with or without potential), are the following.

\begin{definition}\label{def:adm}
 A pair $(p,q)$ is admissible if $2\le q
  <\frac{2d}{d-2}$ ($2\le q\le\infty$ if $d=1$, $2\le q<
  \infty$ if $d=2$)  
  and 
$$\frac{2}{p}=\delta(q):= d\left( \frac{1}{2}-\frac{1}{q}\right).$$
\end{definition}

Strichartz estimates for the Schr\"odinger equation with potential satisfying Assumption~\ref{hyp:V} are a consequence of the results in \cite{Fujiwara79}, \cite{Fujiwara}.
Indeed, the existence of a strongly continuous propagator, unitary on $L^2$, for the linear
Schr\"odinger equation with potential satisfying Assumption~\ref{hyp:V} is proved in \cite{Fujiwara79}. In \cite{Fujiwara} it is proved that, for bounded time intervals, this
propagator exhibits an $L^1 - L^{\infty}$ decay in time. As is now well known, these two properties are the crucial ingredients that lead to Strichartz estimates for the linear
propagator (see e.g. \cite{KeelTao}). A precise statement of these estimates, in our
context, can be found in \cite[Section~2]{Ca11}.

Two points need careful attention at this point, though. The first one
is that these Strichartz 
estimates are just local in time. Unlike the case of the Schr\"odinger
equation without potential, 
whose estimates are global, only Strichartz estimates
for finite time intervals can be expected when general potentials are
considered. The typical counter-example is the linear 
Schr\"odinger equation with a confining harmonic potential
\eqref{eq:confining}, which 
exhibits time periodic solutions and thus cannot possibly satisfy
global dispersive estimates in time. The  
second point requiring a careful observation has to do with the fact
that the potentials being  
considered here also depend on time. The equation is therefore not
autonomous and the propagator  
depends now on the initial time of the flow. In particular, the
maximum size of the 
finite time interval $[s,s+\tau]$ for the 
Strichartz estimates to hold (i.e. the parameter $\tau_0$ of the
previous lemma) should thus depend generally on the overall time
interval $[0,t]$ being considered. 
It can be shown, however, that for potentials whose spatial derivatives are 
uniformly bounded in time --- which is the case
we are imposing in Assumption~\ref{hyp:V}, unlike the more frequent
condition of just  
local boundedness in time found in the literature, as in
\cite{Fujiwara79}, \cite{Fujiwara} or 
\cite{Ca11} --- one can indeed pick a uniform value of $\tau_0$ that
holds globally on $[0,\infty)$. 
See again \cite[Section~2]{Ca11} for a careful 
discussion of these two issues.

We now have all the properties needed to go ahead with the induction procedure. For the sake of 
clarity, we will start with the cases $k=0$ and $k=1$, for which an exponential growth is already 
known from Theorem~\ref{theo:exp}. However, the point is that we can easily write all 
the complete formulas and 
estimates for these simpler cases, which illustrate the essential features 
that remain for general higher values of $k$, whose computations and formulas then become much 
more cumbersome .

The cases $k=0$ and $k=1$ correspond to the same estimates used to prove local well posedness in 
$\Sigma$. The solution $u$, whose global existence has been established in Theorem~\ref{theo:exp},
is thus a fixed point of the Duhamel formulation for any initial time $t_0=s$. It satisfies, therefore, 
\begin{equation}
\label{eq:duhamel}
	u(t)=U(t,s)u(s) - i \int_{s}^{t} U(t,t')(|u|^{2\sigma} u) (t')\dd t',
\end{equation}
where $U(t,s)$ represents the linear propagator of \eqref{eq:nlsp} from time $s$ to time $t$. We
then pick the usual Lebesgue exponents
\begin{equation*}
	q=2\sigma+2; \qquad p=\frac{4\sigma+4}{d\sigma}; \qquad 
	\theta=\frac{2\sigma(2\sigma+2)}{2-(d-2)\sigma}.
\end{equation*}
This choice of the pair $(p,q)$ is admissible and we also have
\begin{equation*}
	\frac{1}{q'}=\frac{2\sigma}{q}+\frac{1}{q}; \qquad
	\frac{1}{p'}=\frac{2\sigma}{\theta}+\frac{1}{p},
\end{equation*}
so that applying Strichartz estimates on \eqref{eq:duhamel}, as long as the finite time 
interval is smaller than the uniform bound $|t-s|\le \tau_0$, we obtain
\begin{equation*}
	\|u\|_{L^p([s,t];L^q) \cap {L^{\infty}([s,t];L^2)}}\le 
	C\|u(s)\|_{L^2}+C \| |u|^{2\sigma}u\|_{L^{p'}([s,t];L^{q'})},
\end{equation*}
and then using H\"older to handle the nonlinear term,
\begin{eqnarray*}
	\|u\|_{L^p([s,t];L^q) \cap {L^{\infty}([s,t];L^2)}} & \le & 
	C\|u(s)\|_{L^2}+C\| u\|_{L^{\theta}([s,t];L^q)}^{2\sigma} \|u\|_{L^{p}([s,t];L^{q})} \\
	&\le& C\|u(s)\|_{L^2}+C |t-s|^{\frac{2\sigma}{\theta}}
	\| u\|_{L^{\infty}([s,t];L^q)}^{2\sigma} \|u\|_{L^{p}([s,t];L^{q})}.
\end{eqnarray*}
Finally, the $H^1$ subcritical condition $\sigma < 2/(d-2)$ permits the use of the Sobolev embedding,
from which we get
\begin{equation*}
	\|u\|_{L^p([s,t];L^q) \cap {L^{\infty}([s,t];L^2)}}  \le
	C\|u(s)\|_{L^2}+C |t-s|^{\frac{2\sigma}{\theta}}
	\| u\|_{L^{\infty}([s,t];H^1)}^{2\sigma} \|u\|_{L^{p}([s,t];L^{q})}.
\end{equation*}

Now, for $k=1$, one needs to develop similar estimates for $\nabla u$ and $x u$. Therefore,
we start by differentiating $\eqref{eq:nlsp}$ to obtain the evolution equation for
$\nabla u$ and its corresponding Duhamel formula
\begin{equation*}
	\nabla u(t) = U(t,s)\nabla u(s) - i \int_{s}^{t} U(t,t')\nabla(|u|^{2\sigma} u) (t')\dd t'
	-i \int_{s}^{t} U(t,t')\big(\nabla V(t') u(t')\big) \dd t'.
\end{equation*}
The only novelty now is the second integral, with the term $\nabla V(t')$, because all the 
remaining terms are estimated 
exactly as in the previous $k=0$ case. Assumption~\ref{hyp:V} implies that $|\nabla V|\lesssim 
\< x \>$ uniformly for all time, and noting that $(1,2)$ are conjugate exponents 
to the admissible pair $(\infty,2)$, we then get
\begin{eqnarray*}
	\|\nabla u\|_{L^p([s,t];L^q) \cap {L^{\infty}([s,t];L^2)}} & \le & 
	C\|\nabla u(s)\|_{L^2}+C\| u\|_{L^{\theta}([s,t];L^q)}^{2\sigma} 
	\|\nabla u\|_{L^{p}([s,t];L^{q})}  \\
	& & + \;C \|u \nabla V\|_{L^{1}([s,t];L^{2})} \\
	&\le& C\|\nabla u(s)\|_{L^2}+C |t-s|^{\frac{2\sigma}{\theta}}
	\| u\|_{L^{\infty}([s,t];H^1)}^{2\sigma} \|\nabla u\|_{L^{p}([s,t];L^{q})} \\
	& & + \;C |t-s| \|u \|_{L^{\infty}([s,t];L^{2})}
	    + C |t-s| \|x u \|_{L^{\infty}([s,t];L^{2})}.
\end{eqnarray*}

Analogously, for the momentum $xu$,
\begin{equation*}
	x u(t) = U(t,s) (x u)(s) - i \int_{s}^{t} U(t,t') (|u|^{2\sigma} x u) (t')\dd t'
	-i \int_{s}^{t} U(t,t') (\nabla u(t')) \dd t',
\end{equation*}
where the first order derivative $\nabla u$ in the second integral now appeared from writing 
\begin{equation*}
	x\frac12 \Delta u= \frac12 \Delta(xu) - \nabla u,
\end{equation*}
when multiplying the whole equation \eqref{eq:nlsp} by $x$, to obtain the evolution equation for the momentum. And 
following the same procedure as above
\begin{eqnarray*}
	\|x u\|_{L^p([s,t];L^q) \cap {L^{\infty}([s,t];L^2)}} & \le & 
	 C\|xu(s)\|_{L^2}+C |t-s|^{\frac{2\sigma}{\theta}}
	\| u\|_{L^{\infty}([s,t];H^1)}^{2\sigma} \|x u\|_{L^{p}([s,t];L^{q})} \\
	& & + \;C |t-s| \|\nabla u \|_{L^{\infty}([s,t];L^{2})}.
\end{eqnarray*}
So that, summing up the estimates for the first derivative and momentum,
\begin{eqnarray*}
	\|(xu,\nabla u)\|_{L^p([s,t];L^q) \cap {L^{\infty}([s,t];L^2)}} & \le & 
	C\|(xu,\nabla u)(s)\|_{L^2}\\
	& & + \;C |t-s|^{\frac{2\sigma}{\theta}}
	\| u\|_{L^{\infty}([s,t];H^1)}^{2\sigma} \|(xu,\nabla u)\|_{L^{p}([s,t];L^{q})} \\
	& & + \;C |t-s| \|(xu, \nabla u) \|_{L^{\infty}([s,t];L^{2})}\\
	& & + \;C |t-s| \| u \|_{L^{\infty}([s,t];L^{2})}.
\end{eqnarray*}

For $k=2$, the first order of momenta and spatial derivatives for which we are really getting
new information about its norm growth, let us denote by $\|w_2\|$ the sum of all corresponding
norms of terms of order 2
\begin{equation*}
	\|w_2\|=\sum_{|\alpha|+|\beta|=2} \|x^{\alpha} \partial^{\beta} u \|.
\end{equation*}
Then, after estimating the corresponding Duhamel formulations as we have done above, we get
\begin{eqnarray*}
	\|w_2\|_{L^p([s,t];L^q) \cap {L^{\infty}([s,t];L^2)}} & \le & 
	C\|w_2(s)\|_{L^2}\\
	& & + \;C |t-s|^{\frac{2\sigma}{\theta}}
	\| u\|_{L^{\infty}([s,t];H^1)}^{2\sigma} \|w_2\|_{L^{p}([s,t];L^{q})} \\
	& & + \;C |t-s| \|w_2 \|_{L^{\infty}([s,t];L^{2})}\\
	& & + \;C |t-s|^{\frac{2\sigma-1}{\theta}}
	\| u\|_{L^{\infty}([s,t];H^1)}^{2\sigma-1} \|\nabla u\|_{L^{p}([s,t];L^{q})}^2 \\
	& & + \;C |t-s| \|(xu, \nabla u) \|_{L^{\infty}([s,t];L^{2})}\\
	& & + \;C |t-s| \| u \|_{L^{\infty}([s,t];L^{2})},
\end{eqnarray*}
where the new term $|t-s|^{\frac{2\sigma-1}{\theta}}
	\| u\|_{L^{\infty}([s,t];H^1)}^{2\sigma-1} \|\nabla u\|_{L^{p}([s,t];L^{q})}^2$ occurs
from differentiating the nonlinear powers $|u|^{2\sigma}u$ twice, in \eqref{eq:nlsp}, for
the evolution equations of the second order derivatives of $u$.

Generally, then, for $k\ge 2$, if we write
\begin{equation*}
	\|w_k\|=\sum_{|\alpha|+|\beta|=k} \|x^{\alpha} \partial^{\beta} u \|,
\end{equation*}
after writing evolution equations for each of these $k$ order momenta and spatial 
derivatoves, obtained by differentiating and multiplying \eqref{eq:nlsp} by enough powers of $x$,
and finally estimating the corresponding Duhamel formulations, we finally obtain 
\begin{eqnarray}
	\lefteqn{\|w_k\|_{L^p([s,t];L^q) \cap {L^{\infty}([s,t];L^2)}} \le  
	C\|w_k(s)\|_{L^2}}  \nonumber \\ 
        \label{eq:generalk2} 
	& & \hspace{1cm} + \;C |t-s|^{\frac{2\sigma}{\theta}}
	\| u\|_{L^{\infty}([s,t];H^1)}^{2\sigma} \|w_k\|_{L^{p}([s,t];L^{q})} \\
	\label{eq:generalk3}
	& & \hspace{1cm} + \;C |t-s| \|w_k \|_{L^{\infty}([s,t];L^{2})}\\
        \label{eq:generalk4}
	& & \hspace{1cm} + \;C \!\!\!\!\!\!\!\!\!\!\! \sum_{\substack{0< j < k \\ 
                        1\le l_1, \dots,l_{j+1} \le k-1 \\
                        l_1+\dots +l_{j+1}=k}
                        }        
        \!\!\!\!\!\!\!\!\!\!\!
        |t-s|^{\frac{2\sigma-j}{\theta}}
	\| u\|_{L^{\infty}([s,t];H^1)}^{2\sigma-j} \|w_{l_1}\|_{L^{p}([s,t];L^{q})}
	\dots \|w_{l_{j+1}}\|_{L^{p}([s,t];L^{q})}\\
        \label{eq:generalk5}
	& & \hspace{1cm} + \;C \!\!\! \sum_{0\le j \le k-1}\!\!\! |t-s| 
        \|w_j \|_{L^{\infty}([s,t];L^{2})}.
\end{eqnarray}
This completely generalizes the $k=2$ case and no qualitatively new terms appear anymore.

To conclude the proof, we only need to argue that for 
\eqref{eq:generalk2} and \eqref{eq:generalk3} we can do
\begin{multline*}
	C |t-s|^{\frac{2\sigma}{\theta}}
	\| u\|_{L^{\infty}([s,t];H^1)}^{2\sigma} \|w_k\|_{L^{p}([s,t];L^{q})} 
	+ \;C |t-s| \|w_k \|_{L^{\infty}([s,t];L^{2})}
        \le \\
	 C \tau^{\alpha}  {\rm e}^{Ct} \|w_k\|_{L^p([s,t];L^q)\cap L^{\infty}([s,t];L^2)}, 
\end{multline*}
by making $|t-s|=\tau \le \tau_0$, $\alpha=\frac{2\sigma}{\theta}$ and
using the exponential growth of the $\Sigma$ norm, from Theorem~\ref{theo:exp}, 
to bound the $\| u\|_{L^{\infty}([s,t];H^1)}$ norm (the $H^1$ subcritical condition $\sigma < \frac{2}{d-2}$
guarantees that $0<\alpha <1$). Whereas, for \eqref{eq:generalk4} and
\eqref{eq:generalk5}, these involve exclusively the norms of the previous induction
steps $\le k-1$, whose growth is known by the induction hypothesis, and which can thus
be bounded by a non-decreasing double exponential function $f(t)=C{\rm e}^{{\rm e}^{Ct}}$.

Therefore, we have finally established that this general Strichartz type inequality for the norm 
of the derivatives and momenta of order $k$, $\|w_k\|_{L^p([s,t];L^q)
  \cap {L^{\infty}([s,t];L^2)}}$, suits exactly 
the hypotheses of Lemma~\ref{lemma:main} from which we infer its double exponential
growth, ending the proof.

\begin{remark}
\label{remark:conf}
 This result is a corollary of the exponential growth rate of the $\Sigma$ norm, coming from
 Theorem~\ref{theo:exp}, because that rate controls the norm $\| u\|_{L^{\infty}([s,t];H^1)}$ in \eqref{eq:generalk2}
 and consequently 
 the estimate \eqref{eq:induction} in the hypotheses of Lemma~\ref{lemma:main}. If the 
 norm $\| u\|_{L^{\infty}([s,t];H^1)}$, however, is known to grow
 at a different rate, then a different final result is obtained for the growth rate of the norms of
 the higher order derivatives and momenta. In particular, for confining time independent harmonic 
 potentials of the type \eqref{eq:confining}, the conservation \eqref{eq:evolenergy} of the energy
 \eqref{eq:energy} implies that $u \in L^{\infty}(\R, \Sigma)$ and thus that this crucial term  is uniformly bounded in time
 \begin{equation*}
    \| u\|_{L^{\infty}([s,t];H^1)} \le C, \quad \forall_{0\le s\le t}.
 \end{equation*}
 Then, if one were to follow the exact same steps of the previous corollary's proof, the only 
 difference would occur in that, instead of an exponential term in \eqref{eq:induction} in 
 the hypotheses of Lemma~\ref{lemma:main}, we would now have 
 \begin{equation*}
  \|w\|_{L^p(I;L^q)\cap L^{\infty}(I;L^2)}\le C \|w(s)\|_{L^2} + C\tau^\alpha 
    \|w\|_{L^p(I;L^q)\cap L^{\infty}(I;L^2)}+f(t),
 \end{equation*}
 that yields a single exponential growth
 \begin{equation*}
  \|w\|_{L^p([0,t];L^q)\cap L^{\infty}([0,t];L^2)}\le C_1{{\rm e}^{C_1t}}
  (\|w(0)\|_{L^2}
  +f(t)),\quad \forall
  t\ge 0.
 \end{equation*}
 Indeed, in the proof of Lemma~\ref{lemma:main}, the size of the small time intervals $\tau\le
 \tau_0$ would now be just a uniform constant and not depend on $t$. So that, the number of these
 intervals $N \sim t/\tau$ would merely be proportional to $t$, and not exponential. As everything
 else follows exactly the same way as in the general case, proved above, this then implies the final 
 result, of the single exponential growth of norms of the higher derivatives and momenta, in the
 time independent confining potential case. 
\end{remark}

\section{Asymptotically vanishing potential: proof of
  Theorem~\ref{theo:decay}} 
\label{sec:decay}

The proof is based on a lens transform as in \cite{Ca11}. The main
idea is that the decay of $\Omega$ as $t\to \infty$ allows us to avoid
the compactification of time, which is one of the features of the lens
transform in the case $\Omega=1$ (see e.g. \cite{CaM3AS,TaoLens}). 

\subsection{Lens transform}
\label{sec:lens}

Since some adaptations will be needed, we resume the approach
presented in \cite[Section~4]{Ca11}. Suppose that $v$ solves a 
non-autonomous equation
\begin{equation}
  \label{eq:nls-na}
  i\d_t v+\frac{1}{2}\Delta v = H(t)\lvert v\rvert^{2\si}v.
\end{equation}
We want $v$ and $u$ (solution of \eqref{eq:nlsp}) to be related by the
formula
\begin{equation}
  \label{eq:lensgen}
  u(t,x)=\frac{1}{b(t)^{d/2}}v\(\zeta(t),\frac{x}{b(t)}\)
{\rm e}^{\frac{i}{2}a(t)|x|^2}, 
\end{equation}
with $a,b,\zeta$ real-valued, and for some time $t_0\ge 0$,
\begin{equation}\label{eq:cilens}
  b(t_0)>0;\quad 
\zeta(t_0)>0. 
\end{equation}

Apply the Schr\"odinger differential operator to the formula
\eqref{eq:lensgen}, and identify the terms so that $u$ solves
\eqref{eq:nlsp}. We find: 
\begin{equation}\label{eq:systlens}
  \dot b = ab\quad ;\quad \dot a+a^2 +\Omega=0\quad ;\quad
\dot \zeta = \frac{1}{b^2}\quad 
;\quad b(t)^{d\si-2}H\(\zeta(t)\)=1. 
\end{equation}
Introduce a solution to 
\begin{equation}\label{eq:pair}
\ddot \mu + \Omega(t)\mu =0 ;\quad 
    \ddot \nu + \Omega(t)\nu =0,
\end{equation}
such that the (constant) Wronskian is $W:= \nu \dot
\mu - \dot\nu \mu\equiv 1$. The solutions to \eqref{eq:systlens} can
be expressed as 
\begin{equation*}
  a=\frac{\dot \nu}{\nu}\quad ;\quad b= \nu\quad ;\quad
  \zeta=\frac{\mu}{\nu}. 
\end{equation*}
Note that $\zeta$ is locally invertible, since $\zeta(t_0)>0$ and 
\begin{equation*}
  \dot \zeta = \frac{1}{b^2}=\frac{1}{\nu^2},\text{ hence }\dot \zeta(t_0)>0.
\end{equation*}
Therefore, the lens transform is locally
invertible. Moreover, we can write, as long as $\nu>0$,
\begin{equation*}
  H(t) = b\(\zeta^{-1}(t)\)^{2-d\si}=
  \nu\(\(\frac{\mu}{\nu}\)^{-1}(t)\)^{2-d\si}. 
\end{equation*}

\subsection{A particular fundamental solution}
\label{sec:fundamental}

The idea is then to construct a suitable solution $(\mu,\nu)$, so that
$\zeta$ is invertible in a neighborhood of $t=+\infty$. This
scattering problem is solved by adapting \cite[Lemma~A.1.2]{DG}:
\begin{lemma}
  Suppose that there exists $\gamma>2$ such that $|\Omega(t)|\lesssim
  \<t\>^{-\gamma}$. Then there exist $\mu_\infty,\nu_\infty$ solving
  \begin{equation*}
  \ddot \mu_\infty + \Omega(t)\mu_\infty =0 ;\quad   \ddot \nu_\infty +
  \Omega(t)\nu_\infty =0, 
  \end{equation*}
with $\nu_\infty(T)=1$, $\mu_\infty(T)=T$ for some $T>0$, and such
that, as $t\to \infty$,  
\begin{align}
\label{eq:nuinf}&  |\nu_\infty(t)-1|=\O\(\frac{1}{t^{\gamma-2}}\),\quad |\dot
  \nu_\infty(t)|= \O\(\frac{1}{t^{\gamma-1}}\),\\
\label{eq:muinf}&  |\mu_\infty(t)-t|=\O\(t^{3-\gamma}\),\quad |\dot
  \mu_\infty(t)-1|= \O\(\frac{1}{t^{\gamma-2}}\).
\end{align}
The Wronskian of $\nu_\infty$ and $\mu_\infty$ is $W:= \nu_\infty \dot
\mu_\infty - \dot\nu_\infty \mu_\infty \equiv 1$. 
\end{lemma}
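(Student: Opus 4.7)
The lemma is a classical scattering-type construction for the linear ODE $\ddot f+\Omega(t)f=0$, regarded as an integrable (since $\gamma>2$) perturbation of the free equation $\ddot f=0$, whose solution space is spanned by the constant mode $1$ and the linear mode $t$. My plan is to produce, by a fixed-point argument near $t=+\infty$, one distinguished solution asymptotic to each free mode, then extend them globally and verify the Wronskian normalization.

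For $\nu_\infty$ (the bounded mode) I would integrate the ODE twice from $+\infty$ to obtain the Volterra equation
\begin{equation*}
 \nu(t)=1-\int_t^\infty (s-t)\,\Omega(s)\,\nu(s)\,ds.
\end{equation*}
On $C_b([T,\infty))$ the associated affine operator has Lipschitz constant bounded by $\int_T^\infty (s-T)|\Omega(s)|\,ds\lesssim T^{2-\gamma}$, which is $<1/2$ for $T$ large; Banach's theorem supplies a fixed point $\nu_\infty$, which solves the ODE after two differentiations and automatically obeys $\nu_\infty(t)\to 1$ and $\dot\nu_\infty(t)\to 0$. Substituting the uniform bound $|\nu_\infty|\le 2$ back into the integral equation and into its $t$-derivative yields the decay rates \eqref{eq:nuinf}.

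For $\mu_\infty$ (the linear mode), the obvious analogue of the above integral equation has integrand $(s-t)\Omega(s)\mu(s)\sim s^{2-\gamma}$, which is not integrable for $2<\gamma\le 3$, so I would instead use reduction of order:
\begin{equation*}
 \mu_\infty(t)=\nu_\infty(t)\int_{t_0}^t\frac{ds}{\nu_\infty(s)^2}.
\end{equation*}
By inspection, $\mu_\infty$ solves the ODE and the Wronskian $\nu_\infty\dot\mu_\infty-\dot\nu_\infty\mu_\infty$ equals $1$ identically, settling the final assertion at no extra cost. Since $\nu_\infty^{-2}=1+\O(s^{2-\gamma})$ by \eqref{eq:nuinf}, the antiderivative equals $t-t_0+\O(t^{3-\gamma})$, and \eqref{eq:muinf} follows after choosing $t_0$ so that the leading constant matches $\mu_\infty(T)=T$. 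Both solutions, a priori defined only on $[T,\infty)$, extend to all of $\R$ by the standard global existence theorem for linear ODEs with locally bounded coefficients, and the normalization $\nu_\infty(T)=1$ is absorbed into the choice of $T$ (using continuity of $\nu_\infty$ together with $\nu_\infty(+\infty)=1$), with a compensating rescaling of $\mu_\infty$ to preserve the Wronskian.

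The main obstacle is the $\mu_\infty$ construction in the regime where $\gamma$ is close to $2$: the naive forward Volterra iteration fails because the linear mode $\mu\sim t$ amplifies the marginal decay of $\Omega$ past the integrability threshold. The reduction-of-order detour through $\nu_\infty$ sidesteps this cleanly, and has the added benefit of producing a fundamental pair with Wronskian exactly $1$ without any separate computation.
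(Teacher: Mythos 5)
Your construction of $\nu_\infty$ is essentially the paper's: a contraction mapping argument near $t=+\infty$ for the Volterra equation obtained by integrating twice with the boundary data $\nu_\infty(+\infty)=1$, $\dot\nu_\infty(+\infty)=0$, with the contraction constant controlled by $\int_T^\infty (s-T)|\Omega(s)|\,\dd s\lesssim T^{2-\gamma}$. Where you genuinely depart from the paper is the construction of $\mu_\infty$. The paper repeats the fixed-point argument for $z=\mu_\infty-t$ with source term $t\,\Omega(t)$, working in the weighted space $Z^1_T=\{z:\sup_{t\ge T}|z(t)|/(t-T)<\infty\}$ (on which the same operator $\mathcal P_T$ has the same small norm), and then verifies $W\equiv1$ a posteriori by taking $t\to\infty$ in the constant Wronskian. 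You instead use reduction of order, $\mu_\infty=\nu_\infty\int \nu_\infty^{-2}$, which delivers the Wronskian normalization $W\equiv 1$ identically by inspection and reduces the asymptotics of $\mu_\infty$ to those already proved for $\nu_\infty$. This is cleaner: it avoids introducing the weighted space entirely and correctly identifies why the naive unweighted Volterra iteration for $\mu_\infty$ fails when $\gamma$ is close to $2$ (the linear growth of the mode eats one power of decay). What the paper's symmetric treatment buys in exchange is a formulation in which both constructions are literal instances of the same abstract fixed-point lemma, at the cost of the weighted space and a separate Wronskian check.

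Two small points deserve attention. First, your remark that the normalization $\nu_\infty(T)=1$ can be "absorbed into the choice of $T$ using continuity" is shaky: nothing forces $\nu_\infty$ to attain the value $1$ exactly on $[T_0,\infty)$ (it may, e.g., approach $1$ monotonically from above). The honest statement — and the one the application actually needs — is that $\nu_\infty(T)$ is close to $1$ and $\nu_\infty$ is bounded away from $0$, so the lens transform of Section~\ref{sec:lens} is well defined; the exact pinning $\nu_\infty(T)=1$ is a cosmetic normalization. Second, when you choose the lower limit in $\int_{t_0}^t \nu_\infty^{-2}\,\dd s$ to match $\mu_\infty(T)=T$, note that for $\gamma>3$ the estimate $\mu_\infty(t)-t=\O(t^{3-\gamma})$ (a quantity tending to $0$) forces the additive constant $-t_0+\int_{t_0}^\infty(\nu_\infty^{-2}-1)$ to vanish exactly; this is achieved not by moving $t_0$ but by adding an appropriate multiple of $\nu_\infty$ to $\mu_\infty$, i.e. $\mu_\infty=\nu_\infty\bigl(\int_T^t\nu_\infty^{-2}\,\dd s + c\bigr)$ with $c$ chosen accordingly, which still preserves the Wronskian since $c\nu_\infty$ is itself a solution. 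With that adjustment spelled out, your argument is complete.
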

\begin{proof}
  For $T>0$, consider the problem
  \begin{equation*}
    \ddot z + \Omega(t)z +g(t)=0;\quad z(T)=0;\quad \lim_{t \to +\infty}
    \dot z(t)=0.
  \end{equation*}
The integral formulation of this problem reads
\begin{equation}\label{eq:int}
  z = r_T + {\mathcal P}_T z,
\end{equation}
with 
\begin{align*}
  r_T(t)& = \int_T^t (s-T)g(s)\dd s +(t-T)\int_t^\infty g(s)\dd s,\\
{\mathcal P}_T z(t)&= \int_T^t (s-T)\Omega(s)z(s)\dd s +
(t-T)\int_t^\infty \Omega(s)z(s)\dd s. 
\end{align*}
Consider the two Banach spaces
\begin{align*}
  Z^0_T & = \{z\in C([T,\infty);\R)\ |\ \|z\|_0:=\sup_{t\ge
    T}|z(t)|<\infty\}, \\
Z^1_T & = \left\{z\in C([T,\infty);\R)\ |\ \|z\|_1:=\sup_{t\ge
    T}\frac{|z(t)|}{t-T}<\infty\right\}.
\end{align*}
We readily check that ${\mathcal P}_T$ is bounded on $Z_T^0$ and
$Z_T^1$, respectively, and that its norm on either of these spaces
equals
\begin{equation*}
  \int_T^\infty (t-T)\Omega(t)\dd t. 
\end{equation*}
By assumption, it is estimated by
  \begin{equation*}
  \int_T^\infty (t-T)\Omega(t)\dd t\lesssim \int_T^\infty\frac{\dd
    t}{\<t\>^{\gamma-1}} + T\int_T^\infty\frac{\dd
    t}{\<t\>^{\gamma}}\lesssim T^{2-\gamma}\Tend T {+\infty} 0.
\end{equation*}
Fixing $T$ sufficiently large, this norm is smaller than one, and
\eqref{eq:int} has a unique solution, given by
\begin{equation*}
  z= \(1-{\mathcal P}_T\)^{-1}r_T \in Z^{j}_T,
\end{equation*}
with $j=0$ or $1$, according to the case considered, provided that
$r_T\in Z^{j}_T$. 
\smallbreak

In view of the statement of the proposition, we start by constructing
$\nu_\infty$: $z=\nu_\infty-1$ must solve
\begin{equation*}
  \ddot z + \Omega(t)z +\Omega(t)=0.
\end{equation*}
Therefore, we use the above general result with $g=\Omega$: the
function $r_T$ belongs to $Z^0_T$, hence a function $\nu_\infty\in
Z^0_T$ (since $1\in Z^0_T$). Since $r_T(t) = \O(t^{2-\gamma})$ as
$t\to \infty$, we readily get
\begin{equation*}
  |z(t)|=|\nu_\infty(t)-1|=\sum_{j=0}^\infty {\mathcal P}_T^j r_T(t)=
  \O\(\frac{1}{t^{\gamma-2}}\).  
\end{equation*}
Differentiating \eqref{eq:int}, we infer
\begin{equation*}
  \dot z(t) = \int_t^\infty g(s)\dd s + \int_t^\infty \Omega(s)z(s)\dd
  s =\int_t^\infty \Omega(s)\dd s + \int_t^\infty \Omega(s)z(s)\dd
  s  .
\end{equation*}
Since $z\in Z^0_T$, we obtain, for $t\ge T$, 
\begin{equation*}
  |\dot z(t)| = |\dot \nu_\infty(t)| = \O\(\frac{1}{t^{\gamma-1}}\). 
\end{equation*}
In the case of $\mu_\infty$, we work in $Z^1_T$ instead:
$z(t)=\mu_\infty(t)-t$ must satisfy
\begin{equation*}
  \ddot z + \Omega(t)z +t\Omega(t)=0,
\end{equation*}
that is, $g(t)=t\Omega(t)$. Since we now have the pointwise estimate
\begin{equation*}
  r_T(t) = \O\(T^{3-\gamma}\)+\O\(t^{3-\gamma}\), 
\end{equation*}
we have $r_T\in Z^1_T$, and for $T$ sufficiently large,
\begin{equation*}
  z= \(1-{\mathcal P}_T\)^{-1}r_T \in Z^{1}_T.
\end{equation*}
Proceeding like for $\nu_\infty$, we infer
\begin{equation*}
  |\mu_\infty(t)-t|=\O\(t^{3-\gamma}\),\quad |\dot
  \mu_\infty(t)-1|= \O\(\frac{1}{t^{\gamma-2}}\).
\end{equation*}
Finally, the Wronskian $W$ does not depend on time, and goes to one as
$t$ goes to infinity, so $W\equiv 1$. 
\end{proof}

\subsection{End of the proof}
\label{sec:end}

In the pseudo-conformal invariant case $\si=2/d$, we see that
\eqref{eq:nls-na} is the standard autonomous equation, for which
there is scattering. In addition, the Sobolev norms are bounded in
time, and the momenta grow polynomially in time as in the statement of
Theorem~\ref{theo:decay}, as established in \cite{Wa04} (see also the
appendix in \cite{Ca11}).  In view of the asymptotic properties of
$\mu_\infty$ and $\nu_\infty$, $u$ satisfies the same properties. We
will be more precise concerning these statement by considering more
generally the case $\si\ge 2/d$, where \eqref{eq:nls-na} must be
thought of as a non-autonomous equation. We readily check the analogue
of the conservation of energy and the pseudo-conformal conservation
law in the present case.
  Let $J(t)=x+it\nabla$. The solution to \eqref{eq:nls-na} satisfies
  \begin{align}
\label{eq:energy-na}\frac{\dd }{\dd t}\(\frac{1}{2}\|\nabla v\|_{L^2}^2
    +\frac{H(t)}{\si+1}\|v(t)\|_{L^{2\si+2}}^{2\si+2}\) &= \frac{\dot
      H(t)}{\si+1}\|v(t)\|_{L^{2\si+2}}^{2\si+2}\\ 
 \label{eq:pseudo-na}   \frac{\dd }{\dd t}\(\frac{1}{2}\|J(t)v\|_{L^2}^2
    +\frac{t^2H(t)}{\si+1}\|v(t)\|_{L^{2\si+2}}^{2\si+2}\) &=
    \frac{tH(t)}{\si+1}(2-d\si)\|v(t)\|_{L^{2\si+2}}^{2\si+2}\\
\notag &\quad +\frac{t^2\dot H(t)}{\si+1}\|v(t)\|_{L^{2\si+2}}^{2\si+2},
  \end{align}
where we recall that
\begin{equation*}
  H(t)
  =\nu_\infty\(\(\frac{\mu_\infty}{\nu_\infty}\)^{-1}(t)\)^{2-d\si}
\end{equation*}
is well-defined for $t\ge t_0$ sufficiently large:
\begin{equation*}
  |\nu_\infty(t)-1|\le \frac{1}{2}\quad \text{for } t\ge t_0. 
\end{equation*}
In addition,
\begin{equation*}
 \dot H(t) = (2-d\si)
 \nu_\infty\(\(\frac{\mu_\infty}{\nu_\infty}\)^{-1}(t)\)^{3 -d\si}\dot
 \nu_\infty\(\(\frac{\mu_\infty}{\nu_\infty}\)^{-1}(t)\), 
\end{equation*}
hence $\dot H(t) = \O\(\frac{1}{t^{\gamma-1}}\)$ as $t\to \infty$. 
We infer from \cite[Lemma~3.1]{CW92} (see also
\cite[Theorem~4.11.1]{CazCourant}) that 
\eqref{eq:nls-na} has a unique, global solution $v\in
C(\R_+;\Sigma)$. Therefore, the lens transform \eqref{eq:lensgen} is
well-defined and bijective in a neighborhood of $t=+\infty$.
\smallbreak

Set, for $t$ sufficiently large,
\begin{equation*}
  y(t) = \frac{t^2H(t)}{\si+1}\|v(t)\|_{L^{2\si+2}}^{2\si+2}.
\end{equation*}
The relation \eqref{eq:pseudo-na} yields
\begin{equation*}
  y(t) \le C\(\|v(t_0)\|_\Sigma\) +\int_{t_0}^t \frac{|\dot
    H(s)|}{H(s)}y(s)\dd s\lesssim 1 + \int_{t_0}^t y(s)\frac{\dd
    s}{s^{\gamma-1}} , 
\end{equation*}
and we infer from Gronwall lemma that $y \in
L^\infty([t_0,\infty))$. Using \eqref{eq:pseudo-na} again, we deduce
that $J(t)v \in L^\infty([t_0,\infty);L^2)$. Since 
\begin{equation*}
  J(t) = it {\rm e}^{i\frac{|x|^2}{2t}}\nabla\( {\rm e}^{-i\frac{|x|^2}{2t}} \cdot\),
\end{equation*}
Gagliardo--Nirenberg inequality yields, for $2\le r\le 2/(d-2)$ ($r\le
\infty$ if $d=1$, $r<\infty$ if $d=2$), 
\begin{equation}\label{eq:Jdecay}
  \|v(t)\|_{L^r}\lesssim
  \frac{1}{|t|^{\delta}}\|v\|_{L^2}^{1-\delta}
  \|J(t)v\|_{L^2}^\delta ,\quad \text{where
  }\delta=d\(\frac{1}{2}-\frac{1}{r}\), 
\end{equation}
hence a decay rate (in time) for Lebesgue norms (in space). Mimicking
the proof of \cite[Proposition~A.4]{Ca11}, we conclude:
\begin{proposition}\label{prop:growth}
  Let $\si \ge 2/d$ be an integer, with $\si\le 2/(d-2)$ if $d\ge
  3$. Suppose $v_{\mid t=0}\in \Sigma^k$ for some $k\in \N$, $k\ge
  1$. Then $v\in C(\R;\Sigma^k)$. In addition, 
for all admissible pair $(p,q)$, $v\in
L^{p}(\R;W^{k,q}(\R^d))$, and 
  \begin{equation*}
    \forall \alpha\in \N^d, \ |\alpha|\le k,\quad \left\lVert x^\alpha
      v\right\rVert_{L^{p}([0,t];L^q)} \lesssim \<t\>^{|\alpha|}. 
  \end{equation*}
\end{proposition}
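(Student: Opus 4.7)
The plan is to prove the proposition by induction on $k$, exploiting the fact that after the lens transform the equation \eqref{eq:nls-na} has no potential, so the Strichartz estimates are \emph{global} in time. The central object is the operator $J(t)=x+it\nabla$, which commutes with the linear Schr\"odinger operator and acts as a derivation on gauge-invariant polynomial nonlinearities (which is why the integrality of $\sigma$ is used). The plan is then to control Strichartz norms of $J(t)^{\gamma_1}\nabla^{\gamma_2} v$ for $|\gamma_1|+|\gamma_2|\le k$, and finally translate back to $x^{\alpha}\partial^{\beta}v$ using $x = J(t)-it\nabla$, which is precisely what produces the $\langle t\rangle^{|\alpha|}$ factor.

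\emph{Base case.} From the arguments preceding the proposition, $v,\, J(t)v \in L^\infty([t_0,\infty);L^2)$. Combined with \eqref{eq:Jdecay}, this gives $\|v(t)\|_{L^{2\sigma+2}}\lesssim \langle t\rangle^{-\delta}$ with $\delta=d\sigma/(2\sigma+2)$, and the assumption $\sigma\ge 2/d$ is exactly what makes $\|v\|_{L^\theta([t_0,\infty);L^{2\sigma+2})}^{2\sigma}$ finite (with $\theta = 2\sigma(2\sigma+2)/(2-(d-2)\sigma)$ as in Section~\ref{sec:double}). A standard Strichartz bootstrap on \eqref{eq:nls-na}, using boundedness of $H(t)$ for $t\ge t_0$, then gives $v,\nabla v,J(t)v\in L^p(\R_+;L^q)$ for every admissible $(p,q)$. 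The bound on $\|xv\|_{L^pL^q}$ follows from $x = J(t)-it\nabla$.

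\emph{Induction step.} Assuming the result up to $k-1$, apply $T_\gamma := J(t)^{\gamma_1}\nabla^{\gamma_2}$ with $|\gamma_1|+|\gamma_2|=k$ to \eqref{eq:nls-na}. Since $[i\partial_t+\tfrac12\Delta,\,J(t)]=[i\partial_t+\tfrac12\Delta,\,\nabla]=0$,
\begin{equation*}
\left(i\partial_t+\tfrac{1}{2}\Delta\right)(T_\gamma v) = H(t)\, T_\gamma\!\left(|v|^{2\sigma}v\right).
\end{equation*}
Because $\sigma\in\N$, $|v|^{2\sigma}v = v^{\sigma+1}\bar v^{\sigma}$ and the Leibniz rule expands $T_\gamma(|v|^{2\sigma}v)$ as a finite sum of products of $2\sigma+1$ factors of type $T_{\gamma'}v$ or $\overline{T_{\gamma'}v}$, with multi-indices summing to $\gamma$. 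Exactly one term has a single factor of full order $k$ (multiplied by $2\sigma$ factors of order zero); all others involve factors of order $\le k-1$, which are controlled by the induction hypothesis.

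\emph{Strichartz closure.} Using the admissible pair $(p,q)=(4(\sigma+1)/(d\sigma),\,2\sigma+2)$ and the corresponding dual estimate on a generic interval $[s,t]$, the top-order term is absorbed by H\"older as
\begin{equation*}
\bigl\|H(t)\, T_\gamma(|v|^{2\sigma}v)\bigr\|_{L^{p'}L^{q'}}
\lesssim \|v\|_{L^\theta([s,t];L^{2\sigma+2})}^{2\sigma}\,\|T_\gamma v\|_{L^p([s,t];L^q)}
\;+\;\text{(lower-order)},
\end{equation*}
where the first factor is \emph{globally} finite by the base case. One splits $\R_+$ into finitely many intervals on which $\|v\|_{L^\theta L^{2\sigma+2}}^{2\sigma}$ is small enough to absorb $\|T_\gamma v\|_{L^p L^q}$; gluing the pieces gives $T_\gamma v\in L^p(\R_+;L^q)\cap L^\infty(\R_+;L^2)$ with an absolute bound. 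Translating via $x^\alpha = (J(t)-it\nabla)^\alpha$ and expanding in powers of $t$ yields
\begin{equation*}
\|x^\alpha v\|_{L^p([0,t];L^q)} \lesssim \sum_{\gamma}\langle t\rangle^{|\alpha|-|\gamma_1|}\,\|T_\gamma v\|_{L^p([0,t];L^q)} \lesssim \langle t\rangle^{|\alpha|}.
\end{equation*}

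The main obstacle I anticipate is the combinatorial bookkeeping of $T_\gamma(|v|^{2\sigma}v)$ and, more substantively, guaranteeing that the Strichartz bootstrap closes \emph{on all of} $[0,\infty)$ rather than just on finite intervals. This is where the hypothesis $\sigma\ge 2/d$ enters decisively: it is what makes $\|v\|_{L^\theta_t L^{2\sigma+2}_x}^{2\sigma}$ integrable up to $+\infty$, so that the absorbing step can be localised on sufficiently small subintervals and the bound can be propagated globally without an exponential blow-up in the number of iterations.
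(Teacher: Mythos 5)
Your proposal is correct and follows the same route as the paper, which delegates the higher-order estimate to \cite{Ca11} (Proposition~A.4): induction on $k$ using the vector fields $J(t)$ and $\nabla$, the fact that $J(t)$ acts as a derivation on $|v|^{2\sigma}v$ precisely because $\sigma$ is an integer, and a globally-closing Strichartz bootstrap driven by the time-decay \eqref{eq:Jdecay}, together with the translation $x=J(t)-it\nabla$ to produce the polynomial factor $\langle t\rangle^{|\alpha|}$. One minor bookkeeping caution: $J_j(t)$ and $\partial_k$ do not commute ($[\partial_k,J_j(t)]=\delta_{jk}$), so the ordering inside $T_\gamma=J(t)^{\gamma_1}\nabla^{\gamma_2}$ matters, but the resulting commutator corrections are constants (hence of strictly lower order) and are controlled by the induction hypothesis.
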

Since
\begin{equation}\label{eq:lensfinal}
  u(t,x)=\frac{1}{\nu_\infty(t)^{d/2}}v\(\frac{\mu_\infty(t)}{\nu_\infty(t)}
  ,\frac{x}{\nu_\infty(t)}\)
{\rm e}^{\frac{i}{2}|x|^2\dot\nu_\infty(t)/\nu_\infty(t)}, 
\end{equation}
we have 
\begin{align*}
  \|u(t)\|_{H^k}&\lesssim \frac{1}{|\nu_\infty(t)|}\left\|
    v\(\frac{\mu_\infty(t)}{\nu_\infty(t)}\) \right\|_{H^k} + |\dot
  \nu_\infty(t)|^k \left\lVert \lvert
    x\rvert^kv\(\frac{\mu_\infty(t)}{\nu_\infty(t)}\)
    \right\rVert_{L^2},  \\
\left\lVert \lvert
    x\rvert^k u(t)\right\rVert_{L^2}&\lesssim |\nu_\infty(t)|^k
  \left\lVert \lvert  
    x\rvert^kv\(\frac{\mu_\infty(t)}{\nu_\infty(t)}\)
    \right\rVert_{L^2}.
\end{align*}
Gathering \eqref{eq:nuinf}, \eqref{eq:muinf} and
Proposition~\ref{prop:growth} together,  we obtain
Theorem~\ref{theo:decay}, up to the scattering result. 
\smallbreak

In view of \eqref{eq:Jdecay}, scattering for \eqref{eq:nls-na} follows
from the standard approach: from Duhamel's formula,
\begin{equation*}
  {\rm e}^{-i\frac{t}{2}\Delta}v(t)- {\rm
    e}^{-i\frac{\tau}{2}\Delta}v(\tau) = -i\int_\tau^t {\rm
    e}^{-i\frac{s}{2}\Delta} \(H(s)|v(s)|^{2\si}v(s)\)ds,
\end{equation*}
we infer that $\({\rm e}^{-i\frac{t}{2}\Delta}v(t)\)_{t\ge 0}$ is a
Cauchy sequence in $\Sigma$ as $t\to \infty$, hence converges to some $v_+\in
\Sigma$. Taking  into account \eqref{eq:nuinf}, \eqref{eq:muinf},
\eqref{eq:lensfinal} and the asymptotics for ${\rm
  e}^{i\frac{t}{2}\Delta}$ recalled in \eqref{eq:scatt}, the last
point of Theorem~\ref{theo:decay} follows. 

\begin{remark}[Sharpness of the decay assumption on $\Omega$]\label{rem:sharp}
  The key property that we have used to define a lens transform in the
  neighborhood of $t=+\infty$ is that the function $\zeta=\mu/\nu$ is
  bijective from $[T,\infty)$ to $[\zeta(T),\infty)$ for $T$
  sufficiently large, where $(\mu,\nu)$ solves \eqref{eq:pair} with a
  Wronskian equal to one. Note that if we had, instead of the above
  property, $\zeta(T)<0$ and $\zeta$ bijective from $[T,\infty)$ to
  $(-\infty,\zeta(T)]$, it would be straightforward to adapt the above
  analysis (simply replace $\nu$ with $-\nu$). The point is that
  unlike, in the case $\Omega=1$, the lens transform does not
  compactify time, because $\nu$ only has a finite number of zeroes,
  and $\dot \zeta=1/\nu^2$ ($\nu(t) = \cos t$ in the case
  $\Omega=1$). This property would remain under the mere 
  assumption (see for instance \cite[Chapter~XI, Theorem~7.1]{Hartman})
  \begin{equation*}
    -\infty\le \limsup_{t\to +\infty} t^2\Omega(t) < \frac{1}{4}.
  \end{equation*}
In the opposite case, $\nu$ has infinitely many oscillations, and even
if it may sound sensible to expect most of Theorem~\ref{theo:decay} to
remain valid, the last conclusion (scattering) should fail: the
dynamics has a different nature.  
\end{remark}

\noindent {\bf Acknowledgments}. J. Drumond Silva would like to thank the kind
hospitality of the Department of Mathematics at the Universit\'e Montpellier 2, where part of this work was developed.

 \end{document}